\theoremstyle{plain} {
  \newtheorem{thm}{Theorem}[section]
  \newtheorem{defn}[thm]{Definition}
  \newtheorem{cor}[thm]{Corollary}
  \newtheorem{lem}[thm]{Lemma}
  \newtheorem{prop}[thm]{Proposition}
  \theoremstyle{definition}
  \newtheorem{rem}[thm]{Remark}
  \theoremstyle{plain}
  \newtheorem{clm}[thm]{Claim}

}
\renewcommand{\subsubsection}{\sssection\rm}
\newcommand{\can}{\text{\rm can}}
\newcommand{\id}{\text{\rm id}}
\newcommand{\pr}{\text{\rm pr}}
\newcommand{\inc}{\text{\rm inc}}
\newcommand{\const}{\text{\rm const}}
\newcommand{\Spec}{\text{\rm Spec}}
\newcommand{\Aff}{\mathbf {A}}
\newcommand{\Pro}{\mathbf {P}}
\newcommand \xra {\xrightarrow }
\newcommand \hra {\hookrightarrow }
\newcommand{\ttf}{{\text{f}}}
\newcommand\mydim{\text{\rm dim}}
\renewcommand \id{\operatorname{id}}
\renewcommand \phi\varphi
\newcommand{\et}{\text{\rm\'et}}
\newcommand{\ZZ}{\mathbb Z}
\begin{document}

\title
{
On Grothendieck-Serre conjecture
concerning principal $G$-bundles over regular semi-local domains containing a finite field: I
}

\author{Ivan Panin\footnote{The author acknowledges support of the
RNF-grant 14-11-00456.}
}


\maketitle

\begin{abstract}
In three preprints
\cite{Pan2},
\cite{Pan3}
and the present one
we prove
Grothendieck-Serre's conjecture concerning principal $G$-bundles over regular semi-local domains $R$
containing {\bf a finite field}
(here $G$ is a reductive group scheme).
The present preprint contains main geometric presentation theorems
which are necessary for that.
The preprint
\cite{Pan2}
contains reduction of the Grothendieck-Serre's conjecture to the case
of a simple simply-connected group scheme.
The preprint
\cite{Pan3}
contains a proof of
Grothendieck-Serre's conjecture for regular semi-local domains $R$
containing a finite field.
One of the main result of the present preprint is Theorem
\ref{MainHomotopy}.

The Grothendieck--Serre conjecture for the case of regular semi-local domains containing
{\bf an infinite field}
is proven in
\cite{FP}.
{\it Thus the conjecture holds for regular semi-local domains containing a field.}

We use results on Bertini theorems from
\cite{Poo1}
and
\cite{Poo2}
to get an appropriative elementary fibration
(Proposition \ref{ArtinsNeighbor}).
The present preprint is inspired by
\cite{PSV}.


\end{abstract}

\section{Introduction}
\label{Introduction}

Recall that an $R$-group scheme $G$ is called reductive
(respectively, semi-simple or simple), if it is affine and smooth as
an $R$-scheme and if, moreover, for each ring homomorphism
$s:R\to\Omega(s)$ to an algebraically closed field $\Omega(s)$, its
scalar extension $G_{\Omega(s)}$
is a connected reductive (respectively, semi-simple or simple)
algebraic group over $\Omega(s)$. The class of reductive group
schemes contains the class of semi-simple group schemes which in
turn contains the class of simple group schemes. This notion of a
simple $R$-group scheme coincides with the notion of a {simple
semi-simple $R$-group scheme from Demazure---Grothendieck
\cite[Exp.~XIX, Defn.~2.7 and Exp.~XXIV, 5.3]{SGA3}.} {\it
Throughout the paper $R$ denotes an integral domain and $G$ denotes
a semi-simple $R$-group scheme, unless explicitly stated otherwise.
All commutative rings that we consider are assumed to be
Noetherian.}
\par
A semi-simple $R$-group scheme $G$ is called {\it simply
connected\/} (respectively, {\it adjoint\/}), provided that for an
inclusion $s:R\hra\Omega(s)$ of $R$ into an algebraically closed
field $\Omega(s)$ the scalar extension $G_{\Omega(s)}$ is a simply
connected (respectively, adjoint) $\Omega(s)$-group scheme. This
definition coincides with the one from \cite[Exp.~XXII.
Defn.~4.3.3]{SGA3}.
\par
\par
A well-known conjecture due to J.-P.~Serre and A.~Grothendieck
\cite[Remarque, p.31]{Se}, \cite[Remarque 3, p.26-27]{Gr1}, and
\cite[Remarque 1.11.a]{Gr2} asserts that given a regular local ring
$R$ and its field of fractions $K$ and given a reductive group
scheme $G$ over $R$ the map
$$ H^1_{\text{\'et}}(R,G)\to H^1_{\text{\'et}}(K,G), $$
\noindent induced by the inclusion of $R$ into $K$, has trivial
kernel.

In three preprints \cite{Pan2}, \cite{Pan3} an the present one we prove this conjecture
for regular semi-local domains containing a finite field. For such a domain containing
an infinite field the conjecture is proved in
\cite{FP}.
{\it Thus the conjecture holds for regular semi-local domains containing a field.}

The preprint \cite{Pan3} contains a proof of the conjecture for regular
semi-local domains containing a finite field.
The general plan of the proof of the conjecture realized in
\cite{Pan3} is this. Using D.Popescu theorem the case of an arbitrary
regular local domain containing a finite field is reduced
to the case of regular local domains of the form as in Theorem
\ref{MainThmGeometric}. Next, Theorem
\ref{MainThmGeometric}
is used to reduce the case of an arbitrary reductive group scheme
to the case of semi-simple simply connected group schemes.
The latter case is easily reduced to the case of
simple simply connected group schemes.
Finally, using Theorem
\ref{MainHomotopy}
the case of simple simply connected group schemes
is proved in \cite{Pan3}. This latter case is proved
using all the technical tools developed in
\cite{FP}. Since we work over finite fields
results from
\cite{Poo1} and \cite{Poo2} are heavily used.

\begin{thm}
\label{MainHomotopy}
Let $k$ be a finite field. Let $\mathcal
O$ be the semi-local ring of finitely many {\bf closed points} on a
$k$-smooth irreducible affine $k$-variety $X$ and let $K$ be its
field of fractions. Let $G$ be a simple simply connected
group scheme over $\mathcal O$.
Let $G$ be a
{\bf semi-simple} simply connected group scheme over $\mathcal O$.
Let $\mathcal G$ be a principal $G$-bundle over $\mathcal O$
which is trivial over $K$. Then there
exists a principal $G$-bundle $\mathcal G_t$ over $\mathcal O[t]$
and a monic polynomial $h(t) \in \mathcal O[t]$ such
that
\par
(i) the $G$-bundle $\mathcal G_t$ is trivial over $\mathcal O[t]_h$,
\par
(ii) the evaluation of $\mathcal G_t$ at $t=0$ coincides
with the original $G$-bundle $\mathcal G$.

\end{thm}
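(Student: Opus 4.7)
\emph{Sketch of proof.} The plan is to apply the geometric presentation theorem (Theorem~\ref{MainThmGeometric}) together with the elementary fibration of Proposition~\ref{ArtinsNeighbor} to reduce the problem to a principal $G$-bundle on $\mathbb{A}^1_S$ whose pullback along a section produces the desired deformation. First, I would spread $\mathcal{G}$ and $G$ out from $\Spec \mathcal{O}$ to an affine open neighborhood $U \subset X$ of the closed points $x_1, \ldots, x_n$, and use the generic triviality of $\mathcal{G}$ to find a closed subscheme $Z \subset U$ of pure codimension one, disjoint from the $x_i$, such that $\mathcal{G}$ is trivial on $U \setminus Z$.

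Next I would apply Proposition~\ref{ArtinsNeighbor} to the pair $(U, Z)$ to obtain, after a further Zariski shrinking $U'$ of $U$ around the closed points, a smooth morphism $\pi \colon U' \to S$ of relative dimension one (with $S$ an open subscheme of an affine space over $k$), a section $\Delta \colon S \to U'$ passing through all of $x_1, \ldots, x_n$, and a finite morphism $\sigma \colon U' \to \mathbb{A}^1_S$ such that $Z \cap U'$ is finite over $S$; $\sigma$ is \'etale in a neighborhood of $Z \cap U'$ and restricts to a closed immersion there; $\sigma^{-1}(\sigma(Z \cap U'))$ is the disjoint union of $Z \cap U'$ and a closed subscheme avoiding the closed points; and $\sigma \circ \Delta$ is the zero-section of $\mathbb{A}^1_S \to S$. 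This yields an elementary Nisnevich distinguished square around $Z$ compatible with $\{x_1,\dots,x_n\}$.

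Given such a presentation, the construction of $\mathcal{G}_t$ proceeds as follows. By Nisnevich excision, the bundle $\mathcal{G}|_{U'}$, which is trivial outside $Z \cap U'$, patches with the trivial bundle on $\mathbb{A}^1_S \setminus \sigma(Z \cap U')$ via the \'etale neighborhood data to produce a principal $G$-bundle $\tilde{\mathcal{G}}$ on $\mathbb{A}^1_S$ that is trivial off $\sigma(Z \cap U')$. Pulling $\tilde{\mathcal{G}}$ back along the morphism $\mathbb{A}^1_{\mathcal{O}} \to \mathbb{A}^1_S$ induced by $\Spec \mathcal{O} \to S$ gives the bundle $\mathcal{G}_t$ over $\mathcal{O}[t]$; its fiber at $t=0$ coincides with $\mathcal{G}$ because $\sigma \circ \Delta$ is the zero-section. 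The image of $\sigma(Z \cap U')$ in $\mathbb{A}^1_{\mathcal{O}}$ is finite over the semi-local ring $\mathcal{O}$, hence contained in the zero-locus of some monic polynomial $h(t) \in \mathcal{O}[t]$, and $\mathcal{G}_t$ is trivial on $\mathcal{O}[t]_h$, which gives both (i) and (ii).

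The hard part will be Step~2: over a finite field $k$, the classical Bertini-type arguments used to produce an elementary fibration with prescribed behavior along $Z$ and along finitely many closed points can fail, because the finite residue fields do not leave enough room for generic choices of linear projections. Overcoming this requires the full strength of Poonen's Bertini theorems \cite{Poo1}, \cite{Poo2}; this is precisely the reason a geometric presentation theorem has to be proved separately in the finite-field case, independently of the infinite-field treatment of \cite{FP}. A further delicate point is arranging $\sigma \circ \Delta$ to coincide with the zero-section, which is what ensures that the evaluation of $\mathcal{G}_t$ at $t=0$ reproduces $\mathcal{G}$ rather than some twist of it.
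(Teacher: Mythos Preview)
Your sketch captures the broad shape of the argument but contains a substantial gap and a structural error.

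The critical omission is that the group scheme $G$ is not constant along the fibration. After spreading out, $G$ is defined on $X$ (or on your $U'$), not on $\Aff^1_S$. When you propose to ``patch with the trivial bundle on $\Aff^1_S \setminus \sigma(Z \cap U')$'', there is no $G$-structure on $\Aff^1_S$ for which to take the trivial bundle: the restriction of $G$ to a fibre of $\pi$ need not be pulled back from the base. The paper resolves this via Theorem~\ref{ThEquatingGroups}: after passing from the elementary fibration to the fibre product $\mathcal X = U \times_S X$ (which carries both $G_{\mathcal X} = q_X^*(G_X)$ and the constant form $G_{\const} = q_U^*(G_U)$), one constructs a morphism of nice triples $\theta: \mathcal X' \to \mathcal X$ together with an isomorphism $\Phi: \theta^*(G_{\const}) \to \theta^*(G_{\mathcal X})$ satisfying $(\Delta')^*(\Phi) = \id_{G_U}$. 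Only after this can the pullback of $P_X$ be regarded as a $G_U$-bundle and glued over $\Aff^1_U$; the condition $(\Delta')^*(\Phi) = \id_{G_U}$ is precisely what makes the fibre at $t=0$ recover $P$ rather than a twist. This step is not a formality: it uses Proposition~\ref{PropEquatingGroups} (which itself relies on Poonen's Bertini theorems), and its output also carries the point-counting conditions (2) and (3) that are the hypotheses of Theorem~\ref{ElementaryNisSquareNew}. Your sketch places all the finite-field difficulty in producing the fibration, but in the paper it is spread across both the equating-groups step and the construction of $\sigma$.

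There is also a structural error. The elementary fibration $p: U' \to S$ of Proposition~\ref{ArtinsNeighbor} does not come with a section $\Delta: S \to U'$ through the $x_i$; in general no such section exists (the residue field extensions $k(x_i)/k(p(x_i))$ need not be trivial). The paper instead base-changes $p$ along $U \to S$ to form $\mathcal X = U \times_S X$, and the diagonal gives a section $\Delta: U \to \mathcal X$ over $U$, not over $S$. This is the content of the nice-triple framework of Section~\ref{NiceTriples} and of Proposition~\ref{BasicTripleProp}. Two further points: your $Z$ should contain the $x_i$, not avoid them (the paper arranges $\ttf(x_i)=0$, and this is used as a hypothesis in Theorem~\ref{ElementaryNisSquareNew}); and your invocation of Theorem~\ref{MainThmGeometric} is misplaced, since that theorem is the reduction from reductive to simply-connected groups and plays no role in the proof of Theorem~\ref{MainHomotopy}.
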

Clearly, this Theorem looks similarly to the Theorem 1.2 from
\cite{PSV}.
However the proof of Theorem
\ref{MainHomotopy}
is much more involved
since the base field is finite.

\begin{thm}\label{MainThmGeometric}
Let $k$ be {\bf a finite field}. Assume that for any irreducible $k$-smooth affine variety $X$ and any
finite family of its {\bf closed points} $x_1, x_2,\dots, x_n$ and the semi-local $k$-algebra
$\mathcal O:= \mathcal O_{X,x_1, x_2,\dots, x_n}$
and all semi-simple simply connected reductive $\mathcal O$-group schemes $H$
the pointed set map
$$H^1_{\text{\rm\'et}}(\mathcal O, H) \to H^1_{\text{\rm\'et}}(k(X), H),$$
\noindent
induced by the inclusion of $\mathcal O$ into its fraction field $k(X)$, has trivial kernel.

Then for any regular semi-local domain $\mathcal O$ of the form
$\mathcal O_{X,x_1, x_2,\dots, x_n}$ above
and any reductive $\mathcal O$-group scheme $G$
the pointed set map
$$H^1_{\text{\rm\'et}}(\mathcal O, G) \to H^1_{\text{\rm\'et}}(K, G),$$
\noindent
induced by the inclusion of $\mathcal O$ into its fraction field $K$, has trivial kernel.
\end{thm}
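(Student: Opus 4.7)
The plan is a two-step reduction in the Colliot-Th\'el\`ene--Sansuc style: first I would shrink $G$ via a $z$-extension so that its derived subgroup becomes simply connected, then I would descend from $G$ to $G^{\mathrm{der}}$ via the torus quotient $G/G^{\mathrm{der}}$. At that point the hypothesis of the theorem applies directly to $G^{\mathrm{der}}$.

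For the first reduction I would construct an exact sequence $1 \to Z \to H \to G \to 1$ with $Z$ a quasi-trivial $\mathcal O$-torus (a product of Weil restrictions $\mathrm{Res}_{\mathcal O'/\mathcal O}(\mathbb G_m)$ for finite \'etale $\mathcal O'/\mathcal O$) and $H^{\mathrm{der}}$ simply connected. Such $z$-extensions exist over our semi-local $\mathcal O$ by the classical Colliot-Th\'el\`ene--Sansuc construction, since $\mathcal O$ admits sufficiently many finite \'etale covers with trivial Picard group. Quasi-triviality of $Z$ gives $H^1(\mathcal O, Z) = H^1(K, Z) = 0$ by Hilbert 90 and the vanishing of $\mathrm{Pic}(\mathcal O')$ for each such $\mathcal O'$. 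A routine pointed-set chase in the cohomology sequence then shows that any $\xi \in H^1(\mathcal O, G)$ with $\xi_K = 0$ lifts to some $\eta \in H^1(\mathcal O, H)$ with $\eta_K = 0$, so it suffices to prove the theorem for $H$; I may therefore assume from now on that $G^{\mathrm{der}}$ is simply connected.

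For the second reduction I set $T := G/G^{\mathrm{der}}$, a torus. The Grothendieck--Serre conjecture for tori over regular semi-local rings (Colliot-Th\'el\`ene--Sansuc, via flasque resolutions) gives injectivity of $H^1(\mathcal O, T) \to H^1(K, T)$. Hence the image of $\xi$ in $H^1(\mathcal O, T)$ vanishes, and $\xi$ lifts to some $\xi' \in H^1(\mathcal O, G^{\mathrm{der}})$. Provided $\xi'$ can be chosen so that $\xi'_K = 0$, the hypothesis applied to the semi-simple simply connected group $G^{\mathrm{der}}$ yields $\xi' = 0$, hence $\xi = 0$.

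The expected main obstacle is arranging $\xi'_K = 0$. Exactness over $K$ forces $\xi'_K = \partial_K(t)$ for some $t \in T(K)$, and two lifts of $\xi$ differ by $\partial(s)$ for $s \in T(\mathcal O)$, so I need to find such an $s$ with $s|_K \cdot t \in \pi(G(K))$. I expect to handle this by refining the $z$-extension in Step 1 so that $T = H/H^{\mathrm{der}}$ is itself quasi-trivial, combined with a purity argument for $T$ at the height-one primes of $\mathcal O$: since $\xi'$ extends over $\mathcal O$, the residues of $\partial_K(t)$ vanish, and purity for tori transfers this vanishing back to $t$ modulo $\pi(G(K))$. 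This is the technical heart of the argument; once it is done, all remaining steps are formal manipulations of pointed-set exact sequences.
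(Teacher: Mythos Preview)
First, note that this paper does not contain a proof of Theorem~\ref{MainThmGeometric}: the introduction explicitly states that it ``is one of the main results of the preprint \cite{Pan2}'', and the body of the paper is devoted entirely to Theorem~\ref{MainHomotopy}. So there is no proof here to compare your attempt against directly; what follows is an assessment of your sketch on its own terms and against the known approach of \cite{Pan2}.

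Your overall strategy---pass to a $z$-extension so that $G^{\mathrm{der}}$ becomes simply connected, then descend to $G^{\mathrm{der}}$ via the torus quotient---is the standard route and is indeed the one taken in \cite{Pan2}. The first reduction is essentially correct, though not quite ``routine'': lifting $\xi$ to $H^1(\mathcal O,H)$ requires the obstruction in $H^2(\mathcal O,Z)$ to vanish, and for that you need injectivity of $H^2(\mathcal O,Z)\to H^2(K,Z)$, i.e.\ of $\mathrm{Br}(\mathcal O')\to\mathrm{Br}(K')$ for the relevant finite \'etale $\mathcal O'/\mathcal O$. This holds because $\mathcal O'$ is regular, but it should be stated.

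The genuine gap is exactly where you place it, and your proposed fix does not work as written. The phrase ``residues of $\partial_K(t)$'' has no meaning: $\partial_K(t)$ lives in the non-abelian pointed set $H^1(K,G^{\mathrm{der}})$, which carries no residue maps. Even read charitably (as valuations of $t\in T(K)$ at height-one primes), the implication you want---that $\xi'_K=\partial_K(t)$ extending to $\mathcal O$ forces $t\in T(\mathcal O)\cdot\pi(G(K))$---is not a ``purity for tori'' statement and does not follow from the Colliot-Th\'el\`ene--Sansuc theorem. If you unwind the twisting formalism, the set of lifts of $\xi$ is a torsor under $T(\mathcal O)/\pi\bigl({}_\xi G(\mathcal O)\bigr)$, and asking that some lift restrict trivially to $K$ is essentially asking that $\xi$ itself lie in the image of $\partial_{\mathcal O}$, which is the conclusion you are after. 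You have not broken this circularity.

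The way \cite{Pan2} (and the parallel argument in \cite{FP}) escapes this is to avoid the sequence $1\to G^{\mathrm{der}}\to G\to T\to 1$ at the critical step and work instead with a \emph{central} isogeny such as $1\to\mu\to G^{sc}\times\mathrm{rad}(G)\to G\to 1$ (or a $z$-extension arranged so that the relevant kernel is central and of multiplicative type). The obstruction to lifting $\xi$ then lies in $H^2_{\mathrm{fppf}}(\mathcal O,\mu)$ for a finite $\mu$ of multiplicative type, and one invokes injectivity of $H^2(\mathcal O,\mu)\to H^2(K,\mu)$ over the regular semi-local $\mathcal O$; the fibre over the neutral class on the $K$-side is then governed by the \emph{abelian} group $H^1(K,\mu)$, for which the needed compatibility with $\mathcal O$ is again a purity statement for finite multiplicative coefficients. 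The torus factor is disposed of by Colliot-Th\'el\`ene--Sansuc and the $G^{sc}$-factor by the hypothesis of the theorem. That passage through abelian $H^2$ and $H^1$ of $\mu$ is the missing idea in your sketch.
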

Theorem
\ref{MainHomotopy}
is one of the main result of the present preprint.
Theorem
\ref{MainThmGeometric} is one of the main result of the preprint
\cite{Pan2}.

The preprint is organized as follows.
In Section 2 elementary fibrations are discussed.
In Section 3 the concept of a nice triple is recalled and Theorems 3.3 and 3.4 are formulated.
In Section 4 Theorem 3.4 is proved.
In Section 5 Theorem 3.3 is proved.
In Section 6 a basic nice triple is constructed.
In Section 7 Theorem
\ref{MainHomotopy}
is proved.

\medskip
The author thanks very much A.Suslin for his interest to the topic of the present preprint.
\section{Elementary fibrations}
\label{ElementaryFibrations}

In this Section we modify a result of M. Artin from~\cite{LNM305}
concerning existence of nice neighborhoods. The following notion is
a modification of the one introduced by Artin in~\cite[Exp. XI,
D\'ef. 3.1]{LNM305}.
\begin{defn}
\label{DefnElemFib} An elementary fibration is a morphism of
schemes $p:X\to S$ which can be included in a commutative diagram
\begin{equation}
\label{SquareDiagram}
    \xymatrix{
     X\ar[drr]_{p}\ar[rr]^{j}&&
\overline X\ar[d]_{\overline p}&&Y\ar[ll]_{i}\ar[lld]_{q} &\\
     && S  &\\    }
\end{equation}
of morphisms satisfying the following conditions:
\begin{itemize}
\item[{\rm(i)}] $j$ is an open immersion dense at each fibre of
$\overline p$, and $X=\overline X-Y$; \item[{\rm(ii)}] $\overline
p$ is smooth projective all of whose fibres are geometrically
irreducible of dimension one; \item[{\rm(iii)}] $q$ is finite
\'{e}tale all of whose fibres are non-empty.
\end{itemize}
\end{defn}

\begin{rem}
\label{ElementraryAndAlmostElementary}
Clearly, an elementary fibration is an almost elementary fibration
in the sense of \cite[Defn.2.1]{PSV}.
\end{rem}

Using repeatedly \cite[Thm.1.3]{Poo1} and
\cite[Thm.1.1]{Poo2}
and
modifying Artin's arguments
\cite[Exp. XI,Prop. 3.3]{LNM305},
one can prove the following result, which is a
slight extension of Artin's result \cite[Exp. XI,Prop. 3.3]{LNM305}.

\begin{prop}
\label{ArtinsNeighbor} Let $k$ be {\bf a finite field}, $X$ be a smooth
{\bf geometrically} irreducible {\bf affine} variety over $k$,
$x_1,x_2,\dots,x_n\in X$ be a family of {\bf closed} points. Then there exists a
Zariski open neighborhood $X^0$ of the family
$\{x_1,x_2,\dots,x_n\}$ and {\bf an elementary fibration} $p:X^0\to
S$, where $S$ is an open sub-scheme of the projective space
$\Pro^{\mydim X-1}$.
\par
If, moreover, $Z$ is a closed co-dimension one subvariety in $X$,
then one can choose $X^0$ and $p$ in such a way that $p|_{Z\bigcap
X^0}:Z\bigcap X^0\to S$ is finite surjective.
\end{prop}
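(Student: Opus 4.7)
The plan is to mimic Artin's proof of~\cite[Exp.~XI, Prop.~3.3]{LNM305}, replacing the generic \emph{linear} projection used in the classical argument (which may fail to exist over a finite field) by a projection defined by $d:=\mydim X$ hypersurfaces of a large common degree $e$, produced via Poonen's Bertini theorems~\cite[Thm.~1.3]{Poo1} and~\cite[Thm.~1.1]{Poo2}. First I would fix a locally closed embedding $X\hra\Pro^{N}$, form its projective closure $\overline X$, and, after composing with a sufficiently high Veronese re-embedding, assume that $Y_\infty:=\overline X\setminus X$ is pure of codimension one in $\overline X$, that $\overline X$ is smooth at each of the $x_i$ and along a dense open subset of $Y_\infty$, and, in the ``moreover'' case, that the closure $\overline Z$ has codimension one in $\overline X$.

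Next I would use the Poonen Bertini theorems---one of which permits imposing prescribed finite-order Taylor data at any finite collection of closed points---to choose $d$ homogeneous forms $F_{1},\dots,F_{d}\in k[T_{0},\dots,T_{N}]$ of the same large degree $e$ whose base locus $B=V(F_{1},\dots,F_{d})$ misses $\overline X$. The tuple then defines a morphism $\varphi\colon\overline X\to\Pro^{d-1}$, automatically finite because $\overline X$ is projective of dimension $d$ and $B\cap\overline X=\emptyset$. By imposing open conditions of positive density on the $F_{i}$ in the spirit of \cite{Poo1} and \cite{Poo2}, I would arrange simultaneously that (a) $\varphi$ is smooth at each $x_{i}$; (b) $\varphi|_{Y_\infty}$ is unramified at each generic point of each component of $Y_\infty$ (hence generically \'etale); (c) a generic hyperplane pullback on $\overline X$ is smooth and geometrically irreducible, so that via Stein factorization the generic fibre of $\varphi$ is geometrically irreducible; and, in the ``moreover'' case, (d) the image of $\overline Z$ under $\varphi$ has dimension $d-1$, forcing $\varphi|_{\overline Z}$ to be finite surjective.

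Let $S\subset\Pro^{d-1}$ be the largest open subset over which $\varphi$ is smooth with geometrically irreducible one-dimensional fibres and $\varphi|_{Y_\infty}$ is finite \'etale with non-empty fibres; by (a)--(c) the points $\varphi(x_{1}),\dots,\varphi(x_{n})$ all lie in $S$. Setting $\overline{X^{0}}:=\varphi^{-1}(S)$, $X^{0}:=\overline{X^{0}}\cap X$, $Y:=\overline{X^{0}}\cap Y_\infty$, and $(p,\overline p,q):=(\varphi|_{X^{0}},\varphi|_{\overline{X^{0}}},\varphi|_{Y})$ produces the desired elementary fibration in the sense of Definition~\ref{DefnElemFib}, the supplementary assertion on $Z$ being furnished by (d). I expect the principal obstacle to be condition (c): securing geometric irreducibility of the generic fibre over a finite field requires combining Poonen's smoothness Bertini with a degree-$e$ Bertini-irreducibility statement, which is more delicate than the pointwise smoothness/\'etaleness conditions (a), (b), (d). The latter are local at finitely many closed or generic points and are thus of exactly the type directly handled by~\cite[Thm.~1.3]{Poo1} and~\cite[Thm.~1.1]{Poo2}.
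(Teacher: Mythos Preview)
Your approach is exactly what the paper indicates: the paper does not give a detailed proof of this proposition but simply states that it follows by modifying Artin's argument from \cite[Exp.~XI, Prop.~3.3]{LNM305} and using \cite[Thm.~1.3]{Poo1} and \cite[Thm.~1.1]{Poo2} repeatedly, which is precisely your plan, including your identification of the geometric irreducibility of the generic fibre as the step requiring~\cite{Poo2}.

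One slip to fix: the morphism $\varphi:\overline X\to\Pro^{d-1}$ is \emph{not} finite---its fibres are curves, as you yourself use in~(c) and in the construction of the elementary fibration. What the condition $B\cap\overline X=\emptyset$ buys you is that $\varphi$ is a well-defined \emph{proper} morphism (hence projective). Finiteness is the correct conclusion for the restrictions $\varphi|_{Y_\infty}$ and $\varphi|_{\overline Z}$, since $Y_\infty$ and $\overline Z$ have dimension $d-1$; for $\varphi$ itself you should simply say it is a morphism with one-dimensional fibres.
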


The following result is proved in
\cite[Prop.2.4]{PSV}.

\begin{prop}
\label{CartesianDiagram} Let $p: X \to S$ be an elementary
fibration. If $S$ is a regular semi-local irreducible scheme, then
there exists a commutative diagram of $S$-schemes
\begin{equation}
\label{RactangelDiagram}
    \xymatrix{
X\ar[rr]^{j}\ar[d]_{\pi}&&\overline X\ar[d]^{\overline \pi}&&
Y\ar[ll]_{i}\ar[d]_{}&\\
\Aff^1\times S\ar[rr]^{\text{\rm in}}&&\Pro^1\times S&&
\ar[ll]_{i}\{\infty\}\times S &\\  }
\end{equation}
\noindent such that the left hand side square is Cartesian. Here $j$
and $i$ are the same as in Definition \ref{DefnElemFib}, while
$\pr_S \circ\pi=p$, where $\pr_S$ is the projection $\Aff^1\times
S\to S$.

In particular, $\pi:X\to\Aff^1\times S$ is a finite surjective
morphism of $S$-schemes, where $X$ and $\Aff^1\times S$ are regarded
as $S$-schemes via the morphism $p$ and the projection $\pr_S$,
respectively.
\end{prop}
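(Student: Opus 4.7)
The plan is to realise $\overline{\pi}: \overline{X} \to \mathbb P^1 \times S$ as the morphism over $S$ defined by a pencil of sections of $\mathcal{O}_{\overline{X}}(nY)$ for a sufficiently large integer $n$. One section will be the $n$-th power $s_\infty := s_Y^n$ of the canonical section $s_Y \in H^0(\overline{X}, \mathcal O(Y))$ cutting out $Y$; the other, $s_0$, will be chosen so that $s_0|_Y$ is a nowhere vanishing section of $\mathcal O(nY)|_Y$. Granted such a pair, the morphism $\overline{\pi} = [s_0 : s_\infty]$ is defined everywhere (no common zero), $\overline{\pi}^{-1}(\{\infty\}\times S) = \{s_\infty = 0\} = Y$ set-theoretically, and this is exactly what forces the left-hand square of \eqref{RactangelDiagram} to be Cartesian and $\pi := \overline{\pi}|_X$ to take values in $\Aff^1\times S$.

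The essential step is to ensure the restriction map
\[
H^0(\overline X,\mathcal O(nY)) \;\twoheadrightarrow\; H^0(Y,\mathcal O(nY)|_Y)
\]
is surjective. Applying $\overline p_*$ to the short exact sequence $0\to \mathcal O((n-1)Y)\to \mathcal O(nY)\to \mathcal O(nY)|_Y\to 0$, surjectivity follows once $R^1\overline p_*\mathcal O((n-1)Y)=0$. By Serre duality on each fibre together with flatness of $\overline p$, this vanishing holds as soon as $(n-1)\deg(Y/S) > 2g - 2$, where $g$ is the fibre genus and $\deg(Y/S)$ is the degree of the finite \'etale cover $q: Y\to S$; both are constants on the irreducible base $S$. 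Taking sections over the affine $S$ then preserves the surjection. Since $Y$ is finite over the semi-local $S$, it is itself semi-local affine, so $\Pic(Y)=0$ and $\mathcal O(nY)|_Y \cong \mathcal O_Y$; I would pick a trivialising section $\sigma \in H^0(Y,\mathcal O(nY)|_Y)$ and lift it through the surjection above to a global $s_0 \in H^0(\overline X,\mathcal O(nY))$.

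It remains to check that $\overline\pi$ so constructed has the required properties. The sections $s_0, s_\infty$ have no common zero, since $s_\infty$ vanishes set-theoretically only on $Y$ and $s_0|_Y = \sigma$ is a unit there, so $\overline\pi$ is a well-defined $S$-morphism, and it is proper because $\overline X/S$ is. On each geometric fibre $\overline X_s$, the rational function $s_0/s_\infty$ has genuine poles along the non-empty $Y_s$, hence the induced map $\overline X_s \to \mathbb P^1$ is non-constant and therefore finite surjective; combined with properness this makes $\overline\pi$ finite and surjective over $\mathbb P^1\times S$. The main delicate point I expect is selecting a single $n$ that works uniformly on every fibre of $\overline p$, which is handled by the constancy of $g$ and $\deg(Y/S)$ on the connected base, and ensuring the lifting of $\sigma$ indeed goes through (which is why the cohomology and base change input, and the affineness of $S$, are essential).
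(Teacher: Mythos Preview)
The paper does not prove this proposition; it simply cites \cite[Prop.~2.4]{PSV}. Your argument---constructing $\overline\pi$ as the pencil $[s_0:s_Y^{\,n}]$ in $|nY|$, with $s_0$ lifted from a trivializing section on the semi-local $Y$ via the vanishing of $R^1\overline p_*\mathcal O((n-1)Y)$ (cohomology and base change plus Serre duality on fibres)---is correct and is the standard approach; indeed the same technique reappears in this very paper in Steps~(iii)--(vi) of the proof of Theorem~\ref{ElementaryNisSquare}. One small remark: your observation that $\overline\pi^{-1}(\{\infty\}\times S)=Y$ only ``set-theoretically'' (scheme-theoretically it is $nY$) is harmless, since the Cartesian condition concerns the open subschemes $X$ and $\Aff^1\times S$, and open subschemes are determined by their underlying sets.
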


\section{Nice triples}
\label{NiceTriples}

In the present section we introduce and study certain collections
of geometric data and their morphisms. The concept of a {\it nice
triple\/} is very similar to that of a {\it standard triple\/}
introduced by Voevodsky \cite[Defn.~4.1]{Vo}, and was in fact
inspired by the latter notion. Let $k$ be {\bf a finite field}, let $X/k$
be a smooth geometrically irreducible {\bf affine} variety, and let
$x_1,x_2,\dots,x_n\in X$ be {\bf a family of closed points}. Further, let
$\mathcal O=\mathcal O_{X,\{x_1,x_2,\dots,x_n\}}$ be the
corresponding geometric semi-local ring.
\begin{defn}
\label{DefnNiceTriple} Let $U:=\text{\Spec}(\mathcal
O_{X,\{x_1,x_2,\dots,x_n\}})$.
A \emph{nice triple} over $U$ consists of the following data:
\begin{itemize}
\item[\rm{(i)}] a smooth morphism $q_U:\mathcal X\to U$, where $\mathcal X$ is an irreducible scheme,
\item[\rm{(ii)}] an element $f\in\Gamma(\mathcal X,\mathcal
O_{\mathcal X})$,
\item[\rm{(iii)}] a section $\Delta$ of the morphism $q_U$,
\end{itemize}
subject to the following conditions:
\begin{itemize}
\item[\rm{(a)}]
each irreducible component of each fibre of the morphism $q_U$
has dimension one,
\item[\rm{(b)}]
the module
$\Gamma(\mathcal X,\mathcal O_{\mathcal X})/f\cdot\Gamma(\mathcal X,\mathcal O_{\mathcal X})$
is finite as
a $\Gamma(U,\mathcal O_{U})=\mathcal O$-module,
\item[\rm{(c)}]
there exists a finite surjective $U$-morphism
$\Pi:\mathcal X\to\Aff^1\times U$,
\item[\rm{(d)}]
$\Delta^*(f)\neq 0\in\Gamma(U,\mathcal O_{U})$.
\end{itemize}
\end{defn}

\begin{defn}
\label{DefnMorphismNiceTriple} A \emph{morphism}
between two nice
triples
over $U$
$$(q^{\prime}: \mathcal X^{\prime} \to U,f^{\prime},\Delta^{\prime})\to(q: \mathcal X \to U,f,\Delta)$$
is an \'{e}tale morphism of $U$-schemes $\theta:\mathcal
X^{\prime}\to\mathcal X$ such that
\begin{itemize}
\item[\rm{(1)}] $q^{\prime}_U=q_U\circ\theta$, \item[\rm{(2)}]
$f^{\prime}=\theta^{*}(f)\cdot h^{\prime}$ for an element
$h^{\prime}\in\Gamma(\mathcal X^{\prime},\mathcal O_{\mathcal
X^{\prime}})$,
\item[\rm{(3)}] $\Delta=\theta\circ\Delta^{\prime}$.
\end{itemize}
\end{defn}
Two observations are in order here.
\par\smallskip
$\bullet$ Item (2) implies in particular that $\Gamma(\mathcal
X^{\prime},\mathcal O_{\mathcal
X^{\prime}})/\theta^*(f)\cdot\Gamma(\mathcal X^{\prime},\mathcal
O_{\mathcal X^{\prime}})$ is a finite
$\mathcal O$-module.
\par\smallskip
$\bullet$ It should be emphasized that no conditions are imposed
on the interrelation of $\Pi^{\prime}$ and $\Pi$.
\par\smallskip

Let us state two crucial results which will be used in our main
construction. Their proofs are given in Sections
\ref{ProofThEquatingGroups}
and
\ref{SectElemNisnevichSquare}
respectively. If
$U$ as in Definition
\ref{DefnNiceTriple}
the for any $U$-scheme $V$ and any closed point $u \in U$
set $V_u=u\times_U V$.
For a finite set $A$ denote $\sharp A$ the cardinality of $A$.

\begin{thm}
\label{ThEquatingGroups} Let $U$ be as in Definition
\ref{DefnNiceTriple}. Let $(\mathcal X,f,\Delta)$ be a nice triple
over $U$. Let $G_{\mathcal X}$ be a
semi-simple
$\mathcal X$-group scheme, and let $G_U:=\Delta^*(G_{\mathcal
X})$. Finally, let $G_{\const}$ be the pull-back of $G_U$ to
$\mathcal X$. Then there exists a morphism
$\theta:(\mathcal X^{\prime},f^{\prime},\Delta^{\prime})\to(\mathcal X,f,\Delta)$
of nice triples {\bf over} $U$
satisfying the following conditions
\begin{itemize}
\item[\rm{(1)}]
there is an isomorphism
$ \Phi: \theta^*(G_{\const}) \to \theta^*(G_{\mathcal X}) $
of $\mathcal X^{\prime}$-group schemes such that
$(\Delta^{\prime})^*(\Phi)=\id_{G_U}$,
\item[\rm{(2)}]
for the closed sub-scheme $\mathcal Z^{\prime}$ of $\mathcal X^{\prime}$ defined by $\{f^{\prime}=0\}$
and any closed point $u \in U$ the point $\Delta^{\prime}(u) \in \mathcal Z^{\prime}_u$ is the only
$k(u)$-rational point of $\mathcal Z^{\prime}_u$,
\item[\rm{(3)}]
for any closed point $u \in U$ and any integer $r \geq 1$ and for $\mathcal Z^{\prime}$ as in \rm{(2)} one has
$$\sharp\{z \in \mathcal Z^{\prime}_u| \text{deg}[z:u]=r \} \leq \ \sharp\{x \in \Aff^1_u| \text{deg}[z:u]=r \}$$
\end{itemize}
\end{thm}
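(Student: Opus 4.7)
The proof proceeds in two stages: an \emph{equating} step producing the isomorphism $\Phi$, followed by a \emph{fibre-adjustment} step arranging the combinatorial conditions (2) and (3).

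\textbf{Stage 1 (equating the group schemes).} Consider the Isom-scheme
$\mathcal I := \underline{\mathrm{Isom}}_{\mathcal X}(G_{\const}, G_{\mathcal X})$,
which is a smooth affine $\mathcal X$-scheme of relative dimension $\dim G$ (smoothness of both $G_{\const}$ and $G_{\mathcal X}$ makes $\mathcal I$ smooth over $\mathcal X$). The identity isomorphism of $G_U = \Delta^*G_{\mathcal X}$ with itself defines a tautological section $s_0:\Delta(U)\to\mathcal I$ over the closed subscheme $\Delta(U)\hookrightarrow\mathcal X$. Since $\mathcal I\to\mathcal X$ is smooth and $U$ is semi-local, the standard étale-neighborhood technique (cut $\mathcal I$ by $\dim G$ hypersurface sections transverse to the fibre at each point of $s_0(\Delta(U))$) produces a closed subscheme $\mathcal X_1\hookrightarrow\mathcal I$ containing $s_0(\Delta(U))$ and étale over $\mathcal X$ near $s_0(\Delta(U))$. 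Let $\theta_1:\mathcal X_1\to\mathcal X$ be this étale morphism and $\Delta_1:=s_0\circ\Delta:U\to\mathcal X_1$. The universal isomorphism on $\mathcal I$ restricts to the required $\Phi:\theta_1^*G_{\const}\xrightarrow{\ \sim\ }\theta_1^*G_{\mathcal X}$ with $\Delta_1^*(\Phi)=\id$. Conditions (a), (b), (d) for $(\mathcal X_1,\theta_1^*(f),\Delta_1)$ are inherited from $\mathcal X$ via the étaleness (and flatness) of $\theta_1$; condition (c), a finite surjective $\Pi_1:\mathcal X_1\to\Aff^1\times U$, comes from a direct Noether-normalization for the one-dimensional relative $U$-scheme $\mathcal X_1$, possibly after further localization.

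\textbf{Stage 2 (arranging the fibre conditions).} Starting from $(\mathcal X_1,\theta_1^*(f),\Delta_1)$, the aim is to find $h'\in\Gamma(\mathcal X_1,\mathcal O_{\mathcal X_1})$ such that $f':=\theta_1^*(f)\cdot h'$ enforces (2) and (3). For each closed point $u\in U$ the fibre decomposes as
$\mathcal Z'_u=\theta_1^{-1}(\mathcal Z)_u\cup\{h'=0\}_u$. I require $h'$ to vanish at $\Delta_1(u)$ whenever $\Delta(u)\notin\mathcal Z_u$ (so that $\Delta_1(u)\in\mathcal Z'_u$), to keep $(\Delta_1)^*(h')$ nonzero in $\mathcal O_U$ (so that (d) survives), to avoid producing any $k(u)$-rational point of $\mathcal Z'_u$ other than $\Delta_1(u)$, and in every fibre to yield at most $\sharp\{x\in\Aff^1_u:\deg[x:u]=r\}$ closed points of degree $r$. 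Such $h'$ is produced via Poonen's Bertini theorems \cite{Poo1,Poo2}: fixing a projective compactification of $\mathcal X_1$ and a sufficiently ample line bundle, we impose vanishing along $\Delta_1(U)$ and simultaneous avoidance of the finite set of dangerous $k(u)$-rational points in the fibres; the density statements of \cite{Poo1,Poo2} guarantee a section meeting all of these constraints at once. The per-degree bound (3) is secured by arranging that $h'$ be pulled back from $\Aff^1\times U$ via $\Pi_1$, so that closed points of $\{h'=0\}_u$ of degree $r$ inject into closed points of $\Aff^1_u$ of the same degree, a further condition that Poonen's refinements can impose.

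\textbf{Main obstacle.} Stage 1 is essentially a finite-field rewrite of the equating argument in \cite{PSV} and presents no new difficulty. The hard work is Stage 2: over an infinite base field one would appeal to classical Bertini and a generic-choice argument for $h'$; over the finite field $k$ no such generic choice exists, and one must invoke the density-based Bertini theorems of Poonen to produce $h'$ that simultaneously enforces the \emph{uniqueness} condition (2) and the degree-by-degree bound (3) at \emph{every} closed fibre. Combining these two rather different types of constraints in a single section is the delicate point, and the quantitative form of \cite{Poo1,Poo2} is what makes the finite-field case accessible.
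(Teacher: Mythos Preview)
Your Stage~2 strategy has a structural gap. You set $f'=\theta_1^*(f)\cdot h'$, so that
\[
\mathcal Z'_u=\{\theta_1^*(f)=0\}_u\cup\{h'=0\}_u .
\]
Multiplying by $h'$ can only \emph{enlarge} this set; it cannot remove points that are already in $\{\theta_1^*(f)=0\}_u$. But nothing in your Stage~1 construction prevents $\{\theta_1^*(f)=0\}_u$ from containing several $k(u)$-rational points (or, for higher $r$, more degree-$r$ points than $\Aff^1_u$). Your $\theta_1$ was chosen to trivialize the Isom-scheme, not to control residue fields in the fibres of $\mathcal Z$. So condition~(2), ``$\Delta'(u)$ is the \emph{only} $k(u)$-rational point of $\mathcal Z'_u$'', can already fail on the $\theta_1^*(f)$ part, and no choice of $h'$ repairs it. The same objection kills your argument for~(3): you only bound the contribution of $\{h'=0\}_u$, not of $\{\theta_1^*(f)=0\}_u$. (Even the $h'$ part is shaky: if $h'=\Pi_1^*(g)$ with $\Pi_1$ finite but not \'etale, the fibre of $\Pi_1$ over a degree-$r$ point of $\{g=0\}_u$ need not consist of a single degree-$r$ point, so the claimed injection fails.)

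The paper handles this by a completely different mechanism: after the equating step (its Proposition~\ref{PropEquatingGroups}, which plays the role of your Stage~1 and already produces a \emph{finite} \'etale cover $S'\to S$ of the relevant semi-localization), it takes a \emph{further} finite \'etale cover $\rho:S''\to S'$ (Lemma~\ref{SmallAmountOfPoints}) engineered so that the fibres $S''_u$ themselves satisfy the point-count conditions. The point is that under a finite \'etale cover, residue fields of points can \emph{grow}, which is exactly how one reduces the number of low-degree points in each fibre. The resulting $\theta:\mathcal X'\to\mathcal X$ is then taken with $h'=1$, i.e.\ $f'=\theta^*(f)$; conditions (2) and (3) hold because all closed points of $\{f'=0\}$ lie in $S''$. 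Poonen's Bertini results enter not in the choice of an auxiliary function $h'$, but inside Lemma~\ref{OjPan} used to build the finite \'etale covers.
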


\begin{thm}
\label{ElementaryNisSquareNew} Let $U$ be as in Definition
\ref{DefnNiceTriple}.
Let $(\mathcal X^{\prime},f^{\prime},\Delta^{\prime})$ be a nice triple
over $U$, such that $f^{\prime}$ {\bf vanishes at every closed point of} $\Delta^{\prime}(U)$.
Let $\mathcal Z^{\prime}$ be the closed sub-scheme of $\mathcal X^{\prime}$ defined by $\{f^{\prime}=0\}$.
Assume that $\mathcal Z^{\prime}$ satisfies the conditions {\rm (2)} and {\rm (3)} from
Theorem
\ref{ThEquatingGroups}.
Then there exist a distinguished finite surjective morphism
$$ \sigma:\mathcal X^{\prime} \to\Aff^1\times U $$
\noindent
of $U$-schemes,
a monic polinomial
$h \in Ker[\mathcal O[t] \xrightarrow{\bar {} \circ \sigma^*}
\Gamma(\mathcal X^{\prime}, \mathcal O_{\mathcal X^{\prime}})/(f^{\prime})]$
and an element $g \in \Gamma(\mathcal X^{\prime},\mathcal O_{\mathcal X^{\prime}} )$
which enjoys the following properties:
\begin{itemize}
\item[\rm{(a)}]
the morphism $\sigma_g= \sigma|_{\mathcal X^{\prime}_g}: \mathcal X^{\prime}_g \to \Aff^1\times U $
is \'{e}tale,
\item[\rm{(b)}]
data
$ (\mathcal O[t],\sigma^*_g: \mathcal O[t] \to \Gamma(\mathcal X^{\prime},\mathcal O_{\mathcal X^{\prime}} )_g, h ) $
satisfies the hypotheses of
\cite[Prop.2.6]{C-TO},
i.e.
$\Gamma(\mathcal X^{\prime},\mathcal O_{\mathcal X^{\prime}} )_g$
is a finitely generated as the
$\mathcal O[t]$-algebra, the element $(\sigma_g)^*(h)$
is not a zero-divisor in
$\Gamma(\mathcal X^{\prime},\mathcal O_{\mathcal X^{\prime}} )_g$
and
$\mathcal O[t]/(h)=\Gamma(\mathcal X^{\prime},\mathcal O_{\mathcal X^{\prime}} )_g/h\Gamma(\mathcal X^{\prime},\mathcal O_{\mathcal X^{\prime}} )_g$ \ ,
\item[\rm{(c)}]
$(\Delta^{\prime}(U) \cup \mathcal Z^{\prime}) \subset \mathcal X^{\prime}_g$ \ ,
\item[\rm{(d)}]
$\mathcal X^{\prime}_{gh} \subseteq \mathcal X^{\prime}_{gf^{\prime}}$ \ .
\end{itemize}
\end{thm}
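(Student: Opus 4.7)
The plan is to construct $\sigma$ so that its restriction to $\mathcal Z'$ is a closed embedding into $\Aff^1\times U$, take $h(t)\in\mathcal O[t]$ to be the monic generator of the ideal of $\sigma(\mathcal Z')$, and choose $g$ to carve out an open $\mathcal X'_g\supseteq\Delta'(U)\cup\mathcal Z'$ on which $\sigma$ is \'etale and $\sigma^{-1}(V(h))\cap\mathcal X'_g=\mathcal Z'\cap\mathcal X'_g$. These data will then realise precisely the Nisnevich-square hypotheses of \cite[Prop.~2.6]{C-TO}.

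The key step is to produce the restricted map $\bar\sigma:\mathcal Z'\hra\Aff^1\times U$. Since $\Gamma(\mathcal Z',\mathcal O_{\mathcal Z'})$ is a finite $\mathcal O$-module, it is semi-local with maximal ideals lying over the closed points $u_1,\dots,u_n$ of $U$, and its quotient by the Jacobson radical of $\mathcal O$ decomposes as $\prod_i\Gamma(\mathcal Z'_{u_i},\mathcal O_{\mathcal Z'_{u_i}})$. For each $u_i$, conditions (2) and (3) of Theorem \ref{ThEquatingGroups} supply the combinatorial room to build a $k(u_i)$-morphism $\mathcal Z'_{u_i}\to\Aff^1_{k(u_i)}$ that is injective on closed points of every residue degree: (2) isolates the unique $k(u_i)$-rational point $\Delta'(u_i)$, which may be sent to any $k(u_i)$-point of $\Aff^1_{k(u_i)}$, while (3) guarantees enough closed points of each higher degree in $\Aff^1_{k(u_i)}$ to receive the remaining ones. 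Such a fibrewise injection of zero-dimensional schemes is automatically a closed immersion. Assemble these fibrewise maps into an element of $\prod_i\Gamma(\mathcal Z'_{u_i},\mathcal O_{\mathcal Z'_{u_i}})$ and lift to $\bar\sigma\in\Gamma(\mathcal Z',\mathcal O_{\mathcal Z'})$; a Nakayama argument on the finite $\mathcal O$-module $\mathrm{coker}(\mathcal O[t]\to\Gamma(\mathcal Z',\mathcal O_{\mathcal Z'}))$ then shows that the induced morphism $\mathcal Z'\to\Aff^1\times U$ is a closed embedding. Lift further via $\Gamma(\mathcal X',\mathcal O_{\mathcal X'})\twoheadrightarrow\Gamma(\mathcal Z',\mathcal O_{\mathcal Z'})$ to $s\in\Gamma(\mathcal X',\mathcal O_{\mathcal X'})$, and set $t:=s+p(t_\Pi)$ for a sufficiently high-degree polynomial $p$, where $t_\Pi$ is the first coordinate of the given finite surjection $\Pi:\mathcal X'\to\Aff^1\times U$. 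A standard Noether-normalisation trick makes $t$ integral over $\mathcal O$ with suitable leading behaviour, so that $\sigma:=(t,q_U)$ is finite surjective while $\sigma|_{\mathcal Z'}$ differs from $\bar\sigma$ only by a fixed polynomial shift and remains a closed embedding.

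The remaining steps are routine. Let $h(t)\in\mathcal O[t]$ be the unique monic polynomial cutting out $\sigma(\mathcal Z')\subset\Aff^1\times U$; it exists because $\sigma(\mathcal Z')$ is finite over the semi-local $U$, and $\sigma^*(h)\in(f')$ by construction, giving $h\in\ker[\mathcal O[t]\xra{\bar{}\circ\sigma^*}\Gamma(\mathcal X',\mathcal O_{\mathcal X'})/(f')]$. The map $\sigma$ is \'etale at every point of $\mathcal Z'\cup\Delta'(U)$: both $\mathcal X'/U$ and $\Aff^1\times U/U$ are smooth of relative dimension one, and $\sigma|_{\mathcal Z'}$ being a closed embedding forces the differential of $\sigma$ to be an isomorphism at such points. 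Using affineness, pick $g\in\Gamma(\mathcal X',\mathcal O_{\mathcal X'})$ with $\mathcal X'_g$ contained in the \'etale locus of $\sigma$, containing $\Delta'(U)\cup\mathcal Z'$, and satisfying $\sigma^{-1}(V(h))\cap\mathcal X'_g=\mathcal Z'\cap\mathcal X'_g$. This immediately gives (a), (c) and the scheme-level matching that yields the ring isomorphism in (b); the remaining pieces of (b)---finite generation and the non-zero-divisor property---follow from $\sigma$ being finite and $h$ not vanishing generically on $\mathcal X'_g$. Finally, (d) is automatic from $\sigma^*(h)\in(f')$, since then $V(f')\cap\mathcal X'_g\subseteq V(\sigma^*(h))\cap\mathcal X'_g$ gives $\mathcal X'_{gh}\subseteq\mathcal X'_{gf'}$.

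The principal obstacle is the construction of $\bar\sigma$ in the second paragraph over the \emph{finite} residue fields $k(u_i)$. Over an infinite field, a generic-position argument would immediately produce a fibrewise closed immersion of a zero-dimensional scheme into $\Aff^1$, but over $\mathbf F_q$ the finite number of closed points of $\Aff^1_{\mathbf F_q}$ at each residue degree can obstruct this. Condition (3) of Theorem \ref{ThEquatingGroups} is tailored precisely to rule out the obstruction, and the technical work of guaranteeing it when building the nice triple (via the Poonen-Bertini theorems \cite{Poo1},\cite{Poo2}) is what makes the present theorem go through.
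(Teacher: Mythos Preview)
Your overall strategy---embed $\mathcal Z'$ into $\Aff^1\times U$ fibrewise using conditions (2) and (3), lift and perturb to a finite $\sigma$, then take $h$ and $g$---matches the paper's in spirit. But there is a genuine gap in the \'etaleness step, and it is exactly the point where the finite-field subtlety bites.

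You write: ``$\sigma|_{\mathcal Z'}$ being a closed embedding forces the differential of $\sigma$ to be an isomorphism at such points.'' This is false. At a closed point $z\in\mathcal Z'_u$, \'etaleness of $\sigma$ is equivalent to $\sigma_u^*(t)-\sigma_u^*(t)(z)\notin\mathfrak m_z^2$ in $\mathcal O_{\mathcal X'_u,z}$, i.e.\ a condition on the image of $t$ in $\mathcal O_{\mathcal X'_u,z}/\mathfrak m_z^2$. A closed embedding of $\mathcal Z'$ only controls the image of $t$ in $\mathcal O_{\mathcal X',z}/(f')$; when $z$ is a reduced point of $\mathcal Z'$ this is just $k(z)$ and carries no first-order information in the fibre direction at all. (Relatedly, your earlier assertion that a fibrewise injection on closed points of matching degree is ``automatically a closed immersion'' already fails once $\mathcal Z'_u$ has nilpotents, but even granting it, the \'etaleness deduction does not follow.)

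The paper repairs this by working not with $\mathcal Z'_u$ but with the second-order thickenings $z^{(2)}=\Spec\big(\mathcal O_{\mathcal X'_u}/\mathfrak m_z^2\big)$ inside the fibre $\mathcal X'_u$, and constructing a closed embedding $i_u:\coprod_{z\in\mathcal Z'_u} z^{(2)}\hookrightarrow\Aff^1_u$. Condition (3) still gives exactly the combinatorial room needed, since embedding $z^{(2)}$ just requires a target point of the same residue degree. Matching $\sigma_u$ to $i_u$ on these thickenings is precisely what forces $\sigma_u$ to be \'etale at each $z$; this is Step (iv)(b) in the paper's argument and is the one place your sketch cannot be patched by a sentence.

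A secondary point: your perturbation $t:=s+p(t_\Pi)$ to force finiteness is reasonable in principle, but you then assert that $\sigma|_{\mathcal Z'}$ ``differs from $\bar\sigma$ only by a fixed polynomial shift and remains a closed embedding.'' Shifting a generator of a finite $k(u)$-algebra by an arbitrary element need not preserve generation, so this requires an argument (or you must reverse the order: first perturb for finiteness, then solve the embedding problem modulo $f'$ with that $\sigma$ already fixed). The paper avoids this interaction entirely by producing $\sigma$ geometrically: it compactifies via the normalization $\bar{\mathcal X'}\to\Pro^1\times U$, sets $\mathcal L=\mathcal O(\mathcal X'_\infty)$, and finds a section $s_1\in\Gamma(\bar{\mathcal X'},\mathcal L^{\otimes n})$ restricting to the prescribed $s_{1,u}$ over each closed $u$; then $\sigma=s_1/s_0^n$ is automatically finite surjective and has the required behaviour on the thickenings.
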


\begin{rem}
\label{ElementaryNisSquareRem}
The item \rm{(b)} of this theorem shows that the cartesian square
\begin{equation}
\label{SquareDiagram2}
    \xymatrix{
     \mathcal X^{\prime}_{gh}  \ar[rr]^{\inc} \ar[d]_{\sigma_{gh}} &&
     \mathcal X^{\prime}_g \ar[d]^{\sigma_g}  &\\
     (\Aff^1 \times U)_{h} \ar[rr]^{\inc} && \Aff^1 \times U &\\
    }
\end{equation}
can be used to glue principal $G$-bundles.
The items
\rm{(a)} and \rm{(b)} show that the square
(\ref{SquareDiagram2})
is an elementary {\bf distinguished} square in the category of smooth $U$-schemes in the sense of
\cite[Defn.3.1.3]{MV}.
The item \rm{(d)} guaranties that a principal $G$-bundle on $\mathcal X^{\prime}$,
which is trivial being restricted to
$\mathcal X^{\prime}_{f^{\prime}}$
is trivial being restricted to
$\mathcal X^{\prime}_{gh}$.
\end{rem}
\section{Proof of Theorem~\ref{ElementaryNisSquareNew}}
\label{SectElemNisnevichSquare}
The nearest aim is to
prove Theorem~\ref{ElementaryNisSquareNew}.
The following theorem is a step to do that.

\begin{thm}
\label{ElementaryNisSquare} Let $U$ be as in Definition
\ref{DefnNiceTriple}. Let $(q^{\prime}_U: \mathcal X^{\prime} \to U,f^{\prime},\Delta^{\prime})$ be a nice triple
over $U$, such that $f^{\prime}$ {\bf vanishes at every closed point of} $\Delta^{\prime}(U)$.
Let $\mathcal Z^{\prime}$ be the closed sub-scheme of $\mathcal X^{\prime}$ defined by $\{f^{\prime}=0\}$.
Assume that $\mathcal Z^{\prime}$ satisfies the conditions {\rm (2)} and {\rm (3)} from
Theorem
\ref{ThEquatingGroups}.
Then there exists a distinguished finite surjective morphism
$$ \sigma:\mathcal X^{\prime} \to\Aff^1\times U $$
\noindent
of $U$-schemes which enjoys the following properties:
\begin{itemize}
\item[\rm{(a)}]
the morphism
$\sigma|_{\mathcal Z^{\prime}}: \mathcal Z^{\prime} \to \Aff^1\times U$
is a closed embedding;
\item[\rm{(b)}] $\sigma$ is \'{e}tale in a neighborhood of
$\mathcal Z^{\prime}\cup\Delta^{\prime}(U)$;
\item[\rm{(c)}]
$\sigma^{-1}(\sigma(\mathcal Z^{\prime}))=\mathcal Z^{\prime}\coprod \mathcal Z^{\prime\prime}$
scheme theoretically and
$\mathcal Z^{\prime\prime} \cap \Delta^{\prime}(U)=\emptyset$;
\item[\rm{(d)}]
$\sigma^{-1}(\{0\} \times U)=\Delta^{\prime}(U)\coprod \mathcal D$ scheme theoretically and $\mathcal D \cap \mathcal Z^{\prime}=\emptyset$;
\item[\rm{(e)}]
for $\mathcal D_1:=\sigma^{-1}(\{1\} \times U)$ one has
$\mathcal D_1 \cap \mathcal Z^{\prime}=\emptyset$.
\item[\rm{(f)}]
there is a monic polinomial
$h \in \mathcal O[t]$
such that
$(h)=Ker[\mathcal O[t] \xrightarrow{\bar {} \circ \sigma^*}
\Gamma(\mathcal X^{\prime}, \mathcal O_{\mathcal X^{\prime}})/(f^{\prime})]$.
\end{itemize}
\end{thm}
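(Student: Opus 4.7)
Any $U$-morphism $\sigma: \mathcal X^{\prime} \to \Aff^1 \times U$ is determined by a single element $\tau := \sigma^*(t) \in \Gamma(\mathcal X^{\prime}, \mathcal O_{\mathcal X^{\prime}})$, and I will construct $\tau$ by first prescribing its restriction to $\mathcal Z^{\prime} \cup \Delta^{\prime}(U)$ and then extending it. Since $\mathcal X^{\prime} \to U$ is smooth of relative dimension one and $\mathcal Z^{\prime}=V(f^{\prime})$ is finite over $U$, the subscheme $\mathcal Z^{\prime}$ is in fact finite flat over $U$, and since $U$ is semi-local it is finite free over $\mathcal O$. Consequently, once property (a) is in hand --- namely, $\sigma|_{\mathcal Z^{\prime}}: \mathcal Z^{\prime} \hookrightarrow \Aff^1 \times U$ is a closed immersion --- property (f) will be automatic, with $h$ being the characteristic polynomial of multiplication by $\bar\tau := \tau \bmod f^{\prime}$ on the free $\mathcal O$-module $\Gamma(\mathcal Z^{\prime}, \mathcal O_{\mathcal Z^{\prime}})$.

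The core of the argument is the construction of $\bar\tau$ as a primitive element of the finite semi-local $\mathcal O$-algebra $\Gamma(\mathcal Z^{\prime}, \mathcal O_{\mathcal Z^{\prime}})$. Conditions (2) and (3) of Theorem~\ref{ThEquatingGroups} are tailor-made for this: for each closed point $u \in U$, condition (2) picks out $\Delta^{\prime}(u)$ as the unique $k(u)$-rational point of the fibre $\mathcal Z^{\prime}_u$, and condition (3) produces a degree-preserving injection of the closed points of $\mathcal Z^{\prime}_u$ into those of $\Aff^1_{k(u)}$ which sends $\Delta^{\prime}(u) \mapsto 0$ and avoids $1$. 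Packaging these fibrewise data by the Chinese remainder theorem on $\Gamma(\mathcal Z^{\prime}, \mathcal O_{\mathcal Z^{\prime}})$ produces an element $\bar\tau$ whose reduction modulo each maximal ideal of $\mathcal O$ is a primitive generator, which vanishes at every $\Delta^{\prime}(u)$, and which never takes the value $1$ at any closed point of $\mathcal Z^{\prime}$; Nakayama then promotes it to a generator of the whole semi-local algebra.

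To pass from $\bar\tau$ to $\tau$, lift $\bar\tau$ to some $\tau_1 \in \Gamma(\mathcal X^{\prime}, \mathcal O_{\mathcal X^{\prime}})$. Because $f^{\prime}$ vanishes at every closed point of $\Delta^{\prime}(U)$, the scalar $(\Delta^{\prime})^*(\tau_1) \in \mathcal O$ lies in the Jacobson radical, and subtracting $(q^{\prime}_U)^*\big((\Delta^{\prime})^*(\tau_1)\big)$ yields a lift which vanishes on all of $\Delta^{\prime}(U)$ and which changes $\bar\tau$ only by a radical element, so that all closed-point properties of the previous step survive. Next, using the finite coordinate $t_0 = \pr_{\Aff^1} \circ \Pi$ supplied by condition~(c) of Definition~\ref{DefnNiceTriple}, I add a suitable element of $f^{\prime} \cdot \Gamma(\mathcal X^{\prime}, \mathcal O_{\mathcal X^{\prime}})$, which does not disturb $\bar\tau$, so that $\Gamma(\mathcal X^{\prime}, \mathcal O_{\mathcal X^{\prime}})$ becomes module-finite over $\mathcal O[\tau]$. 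The resulting $\sigma = (\tau, q^{\prime}_U)$ is then finite surjective; property (a) holds, and hence (f) follows. Étaleness near $\mathcal Z^{\prime} \cup \Delta^{\prime}(U)$ (property (b)) is a pointwise verification combining relative-dimension-one smoothness of $\mathcal X^{\prime}/U$ with the primitivity of $\bar\tau$ and the non-vanishing of $d\tau$ along $\Delta^{\prime}(U)$; granted (b), the clopen decompositions (c), (d), (e) are immediate consequences of the closed-immersion statement on $\mathcal Z^{\prime}$ together with the prescribed values ($\tau = 0$ on $\Delta^{\prime}(U)$, $\bar\tau \neq 0$ on $\mathcal Z^{\prime} \setminus \Delta^{\prime}(U)$, $\bar\tau \neq 1$ on $\mathcal Z^{\prime}$).

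The principal obstacle is the primitive-element construction: over an infinite ground field a generic linear form on $\mathcal X^{\prime}$ would settle the matter in one stroke, whereas over a finite $k$ one genuinely requires the numerical inequalities of Theorem~\ref{ThEquatingGroups} --- condition~(2) to make the value $0$ available for the section $\Delta^{\prime}(U)$, and condition~(3) to guarantee enough closed points of each degree in $\Aff^1_{k(u)}$ to realize the required embedding while still avoiding $1$. The remaining verifications are routine once $\bar\tau$ has been pinned down.
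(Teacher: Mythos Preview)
Your overall strategy --- construct $\bar\tau$ on $\mathcal Z^{\prime}$ as a primitive element using conditions (2) and (3), lift it, adjust for $\Delta^{\prime}(U)$, and then perturb by a multiple of $f^{\prime}$ to force finiteness --- is sound in outline, and you have correctly isolated the reason why conditions (2) and (3) are needed over a finite field. However, there is a genuine gap in your verification of \'etaleness (property (b)).

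The issue is this: at a closed point $z\in\mathcal Z^{\prime}_u$ where $f^{\prime}$ has a \emph{simple} zero on the fibre $\mathcal X^{\prime}_u$, the local ring $\mathcal O_{\mathcal Z^{\prime}_u,z}$ is just the residue field $k(z)$. Primitivity of $\bar\tau$ at such a point says only that the value $\bar\tau(z)$ generates $k(z)$ over $k(u)$; it carries no information whatsoever about $d\tau$ in $\mathfrak m_z/\mathfrak m_z^2$. Consequently an arbitrary lift $\tau$ of $\bar\tau$ need not be a uniformizer at $z$, and $\sigma$ need not be \'etale there. Your appeal to ``primitivity of $\bar\tau$'' for (b) works only at points where $\mathcal Z^{\prime}$ is fibrewise non-reduced (multiplicity $\ge 2$), because only then does $\mathcal Z^{\prime}$ already contain the first infinitesimal neighbourhood. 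Likewise, ``non-vanishing of $d\tau$ along $\Delta^{\prime}(U)$'' is asserted but not arranged by your construction.

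The paper's proof repairs exactly this by prescribing $\sigma$ not on $\mathcal Z^{\prime}$ but on the first infinitesimal thickening $\coprod_{z\in\mathcal Z^{\prime}_u} z^{(2)}$, where $z^{(2)}=\Spec\bigl(\mathcal O_{\mathcal X^{\prime}_u,z}/\mathfrak m_z^2\bigr)$. Condition (3) is again what guarantees that such a disjoint union embeds into $\Aff^1_u$ (each $z^{(2)}$ lands over a single closed point of the same degree as $z$, and there are enough such points). Once $\sigma_u$ restricts to a closed embedding on $\coprod z^{(2)}$, \'etaleness at every $z$ is automatic. The paper then handles finiteness not by an ad hoc perturbation inside $(f^{\prime})$ as you propose --- a step you leave entirely unspecified and which would itself have to be reconciled with the \'etaleness requirement --- but by passing to the normalization $\bar{\mathcal X^{\prime}}$ of $\Pro^1\times U$ and realizing $\sigma$ as a ratio $s_1/s_0^n$ of sections of a line bundle, so that finiteness is built in from the compactification.

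In short: replace your target $\mathcal Z^{\prime}$ by $\coprod_z z^{(2)}$ when constructing $\bar\tau$ fibrewise, and either follow the paper's line-bundle compactification for finiteness or make your $f^{\prime}$-perturbation step precise enough to control $d\tau$ simultaneously.
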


\begin{proof}[Sketch of the proof of Theorem \ref{ElementaryNisSquare}]
For any closed point $u \in U$ and any $U$-scheme $V$ let $V_u=u\times_U V$ be
the fibre of the scheme $V$ over the point $u$.

Step (i). For any closed point $u \in U$ and any point $z \in \mathcal Z^{\prime}_u$ there is a closed embedding
$z^{(2)} \hra \Aff^1_u$, where
$z^{(2)}:=Spec(\Gamma(\mathcal X^{\prime}_u, \mathcal O_{\mathcal X^{\prime}_u})/\mathfrak m^2_z)$
for the maximal ideal
$\mathfrak m_z \subset \Gamma(\mathcal X^{\prime}_u, \mathcal O_{\mathcal X^{\prime}_u})$
of the point
$z$ regarded as a point of $\mathcal X^{\prime}$. This holds, since the $k(u)$-scheme
$\mathcal X^{\prime}_u$
is equi-dimensional of dimension one, affine and $k(u)$-smooth.

Step (ii). For any closed point $u \in U$ there is a closed embedding
$i_u: \coprod_{z \in \mathcal Z^{\prime}_u} z^{(2)} \hra \Aff^1_u$
of the $k(u)$-schemes. To see this apply Step (i) and use that
$\mathcal Z^{\prime}$ satisfies the condition {\rm (3)} from
Theorem
\ref{ThEquatingGroups}.

Step(iii) is to introduce some notation.
Since $(\mathcal X^{\prime},f^{\prime},\Delta^{\prime})$ is a nice triple
over $U$ there is a finite surjective morphism
$\mathcal X^{\prime} \xrightarrow{\Pi} \Aff^1\times U$
of the $U$-schemes.
Take the composite
$\mathcal X^{\prime} \xrightarrow{\Pi} \Aff^1\times U \hra \Pro^1\times U$
morphism and denote by
$\bar {\mathcal X^{\prime}}$ the normalization of $\Pro^1\times U$
in the fraction field
$k(\mathcal X^{\prime})$
of the ring
$\Gamma(\mathcal X^{\prime}, \mathcal O_{\mathcal X^{\prime}})$.
The normalization of
$\Aff^1 \times U$ in $k(\mathcal X^{\prime})$
coincides with
the scheme
$\mathcal X^{\prime}$,
since
$\mathcal X^{\prime}$ is a regular scheme.
So, we have a Cartesian diagram of $U$-schemes
\begin{equation}
\label{NormalizationSquare}
    \xymatrix{
     \mathcal X^{\prime}  \ar[rr]^{\inc} \ar[d]_{\Pi} &&
     \bar {\mathcal X^{\prime}} \ar[d]^{\bar \Pi}  &\\
     \Aff^1 \times U \ar[rr]^{\inc} && \Pro^1 \times U, &\\
    }
\end{equation}
in which the horizontal arrows are open embedding.

Let
$\mathcal X^{\prime}_{\infty}$
be the Cartie-divisor
$(\bar \Pi)^{-1}(\infty \times U)$
in
$\bar {\mathcal X^{\prime}}$.
Let
$\mathcal L:=\mathcal O_{\bar {\mathcal X^{\prime}}}(\mathcal X^{\prime}_{\infty})$
be the corresponding invertible sheaf and let
$s_0 \in \Gamma(\bar {\mathcal X^{\prime}}, \mathcal L)$
be its section vanishing exactly on
$\mathcal X^{\prime}_{\infty}$.
One has a Cartesian square of $U$-schemes
\begin{equation}
\label{SquareForClosedFibre}
    \xymatrix{
     \mathcal X^{\prime}_{{\infty},u}  \ar[rr]^{j_{\infty}} \ar[d]_{in_u} &&
     \mathcal X^{\prime}_{\infty} \ar[d]^{in}  &\\
     \bar {\mathcal X^{\prime}_u} \ar[rr]^{j} && \bar {\mathcal X^{\prime}}, &\\
    }
\end{equation}
which shows that the closed embedding
$in_u: \mathcal X^{\prime}_{{\infty},u} \hra \bar {\mathcal X^{\prime}_u}$
is a Cartie-divisor on
$\bar {\mathcal X^{\prime}_u}$.
Set
$\mathcal L_u=j^*(\mathcal L)$
and
$s_{0,u}=s_0|_{\bar {\mathcal X^{\prime}_u}} \in \Gamma(\bar {\mathcal X^{\prime}_u},\mathcal L_u)$.

Step (iv). There exists an integer $n>0$ and a section
$s_{1,u} \in \Gamma(\bar {\mathcal X^{\prime}_u},\mathcal L^{\otimes n}_u)$
which has no zeros on
$\mathcal X^{\prime}_{{\infty},u}$
and such that the morphism
$$[s^n_{0,u}: s_{1,u}]: \bar {\mathcal X^{\prime}_u} \to \Pro^1_u$$
has the following two properties \\
(a) the morphism $\sigma_u= s_{1,u}/s^n_{0,u}: \mathcal X^{\prime}_u \to \Aff^1_u$ is finite surjective,\\
(b) $\sigma_u|_{\coprod_{z \in \mathcal Z^{\prime}_u} z^{(2)}}=i_u: \coprod_{z \in \mathcal Z^{\prime}_u} z^{(2)} \hra \Aff^1_u$,
where $i_u$ is
from the step (ii); in particular,
$\sigma_u$ is \'{e}tale at every point
$z \in \mathcal Z^{\prime}_u$.

Step (v). There exists a section
$s_1 \in \Gamma(\bar {\mathcal X^{\prime}}, \mathcal L^{\otimes n})$
such that for any closed point $u \in U$ one has
$s_1|_{\mathcal X^{\prime}_u}=s_{1,u}$.

Step (vi). If $s_1$ is such as in the step (v), then the morphism
$$\sigma=(s_1/s^n_0, pr_U): \mathcal X^{\prime} \to \Aff^1\times U$$
is finite and surjective.

{\bf We are ready now to check step by step all the statements of the Theorem.}

{\it The assertion (b)}. Since the schemes $\mathcal X^{\prime}$ and $\Aff^1\times U$
are regular and the morphism $\sigma$ is finite and surjective, the morphism
$\sigma$ is flat by a theorem of Grothendieck.

So, to check that $\sigma$ is \'{e}tale at a closed point
$z \in \mathcal Z^{\prime}$
it suffices to check that for the point
$u=q^{\prime}_U(z)$
the morphism
$\sigma_u: \mathcal X^{\prime}_u \to \Aff^1_u$
is \'{e}tale at the point $z$. The latter does hold by
the step (iv), item (b).
Whence $\sigma$ is \'{e}tale at all the closed points of
$\mathcal Z^{\prime}$. By the hypotheses of the Theorem
the set of closed points of
$\Delta^{\prime}(U)$
is contained in the set of the closed points of
$\mathcal Z^{\prime}$.
Whence $\sigma$ is \'{e}tale also at all the closed points of
$\Delta^{\prime}(U)$. The schemes
$\Delta^{\prime}(U)$
and
$\mathcal Z^{\prime}$
are both semi-local.
Thus,
$\sigma$ is \'{e}tale in a neighborhood of
$\mathcal Z^{\prime} \cup \Delta^{\prime}(U)$.

{\it The assertion (a)}.
For any closed point $u \in U$ and
{\bf any point $z \in \mathcal Z^{\prime}_u$ }
the $k(u)$-algebra homomorphism
$\sigma^*_u: k(u)[t] \to k(u)[\mathcal X^{\prime}_u]$
is \'{e}tale at the maximal ideal
$\mathfrak m_z$ of the
$k(u)$-algebra
$k(u)[\mathcal X^{\prime}_u]$
and the composite map
$k(u)[t] \xrightarrow{\sigma^*_u} k(u)[\mathcal X^{\prime}_u] \to k(u)[\mathcal X^{\prime}_u]/\mathfrak m_z$
is an epimorphism. Thus, for any integer $r>0$ the $k(u)$-algebra homomorphism
$k(u)[t] \to k(u)[\mathcal X^{\prime}_u]/\mathfrak m^r_z$
is an epimorphism. The local ring
$\mathcal O_{\mathcal Z^{\prime}_u,z}$
of the scheme
$\mathcal Z^{\prime}_u$
at the point $z$
is of the form
$k(u)[\mathcal X^{\prime}_u]/\mathfrak m^s_z$
for an integer $s$. Thus, the $k(u)$-algebra homomorphism
$$k(u)[t] \xrightarrow{\sigma^*_u} k(u)[\mathcal X^{\prime}_u] \to \mathcal O_{\mathcal Z^{\prime}_u,z}$$
is surjective. Since
$\sigma_u|_{\coprod_{z \in \mathcal Z^{\prime}_u} z^{(2)}}=i_u$
and $i_u$ is a closed embedding one concludes that the
$k(u)$-algebra homomorphism
$$k(u)[t] \to \prod_{z/u}O_{\mathcal Z^{\prime}_u,z}=\Gamma(\mathcal Z^{\prime}_u, \mathcal O_{\mathcal Z^{\prime}_u})$$
is surjective.
Let
${\bf u}=\coprod Spec(k(u))$ regarded as the closed sub-scheme of $U$,
where $u$ runs over all closed points of $U$.
Then, for the scheme
$\mathcal Z^{\prime}_{{\bf u}}={\bf u} \times_U \mathcal Z^{\prime}$
the $k[{\bf u}]$-algebra homomorphism
\begin{equation}
\label{Finiteness}
k[{\bf u}][t] \to \Gamma(\mathcal Z^{\prime}_{{\bf u}}, \mathcal O_{\mathcal Z^{\prime}_{{\bf u}}})
\end{equation}
is surjective.

Since $(\mathcal X^{\prime},f^{\prime},\Delta^{\prime})$ is a nice triple over $U$,
the $\mathcal O$-module
$\Gamma(\mathcal Z^{\prime}, \mathcal O_{\mathcal Z^{\prime}})$
is finite.
Thus, the
$k[{\bf u}]$-module
$\Gamma(\mathcal Z^{\prime}_{{\bf u}}, \mathcal O_{\mathcal Z^{\prime}_{{\bf u}}})$
is finite.
Now the surjectivity of the $k[{\bf u}]$-algebra homomorphism
(\ref{Finiteness})
and the Nakayama lemma show that the $\mathcal O$-algebra homomorphism
$\mathcal O[t] \to \Gamma(\mathcal Z^{\prime}, \mathcal O_{\mathcal Z^{\prime}})$
is surjective. Thus,
$\sigma|_{\mathcal Z^{\prime}}: \mathcal Z^{\prime} \to \Aff^1\times U$
is a closed embedding.

{\it The assertion (e)}.
The morphism
$\Delta^{\prime}$
is a section of the structure morphism
$q^{\prime}_U: \mathcal X^{\prime} \to U$
and the morphism
$\sigma$
is a morphism of the $U$-schemes.
Hence
the composite morphism
$t_0:= \sigma \circ \Delta^{\prime}$
is a section of the projection
$pr_U: \Aff^1\times U \to U$.
This section is defined by an element
$a \in \mathcal O$.
There is another section
$t_1$ of the projection
$pr_U$
defined by
the element
$1-a \in \mathcal O$.
Making an affine change of coordinates on
$\Aff^1_U$ we may and will assume that
$t_0(U)=\{0\} \times U$
and
$t_1(U)=\{1\} \times U$.

Since $\mathcal D_1$ and $\mathcal Z^{\prime}$ are semi-local, to prove the assertion (e)
it suffices to check that
$\mathcal D_1$ and $\mathcal Z^{\prime}$
have no common closed points.
Let $z \in \mathcal D_1 \cap \mathcal Z^{\prime}$ be a common closed point.
Then
$\sigma(z) \in \{1\} \times U$.
Let $u=q^{\prime}_U(z)$. We already know that
$\sigma|_{\mathcal Z^{\prime}}$
is a closed embedding. Thus
$deg[z:u]=deg[\sigma(z):u]=1$.
The $U$-scheme $\mathcal Z^{\prime}$ satisfies the conditions {\rm (2)}
of Theorem
\ref{ThEquatingGroups}.
Thus, $z=\Delta^{\prime}(u)$.
In this case
$\sigma(z) \in \{0\} \times U$.
But
$\sigma(z) \in \{1\} \times U$.
This is a contradiction.
Whence
$\mathcal D_1 \cap \mathcal Z^{\prime}=\emptyset$.

{\it The assertion (c)}.
The finite morphism
$\sigma$
is \'{e}tale in a neighborhood of
$\mathcal Z^{\prime}$
by the item (b) of the Theorem.
By the item
(a) of the Theorem
$\sigma|_{\mathcal Z^{\prime}}$
is a closed embedding.
Thus, the morphism
$\sigma^{-1}(\sigma(\mathcal Z^{\prime})) \to \sigma(\mathcal Z^{\prime})$
of affine schemes is finite and
there is an affine open sub-scheme
$V$ of the scheme
$\sigma^{-1}(\sigma(\mathcal Z^{\prime}))$
such that the morphism
$V \to \sigma(\mathcal Z^{\prime})$
is \'{e}tale.
Since
$\sigma|_{\mathcal Z^{\prime}}$
is a closed embedding there is a unique section
$s$ of the morphism
$\sigma^{-1}(\sigma(\mathcal Z^{\prime})) \to \sigma(\mathcal Z^{\prime})$
with the image
$\mathcal Z^{\prime}$
and this image is contained in $V$.
By \cite[Lemma 5.3]{OP1} the scheme
$\sigma^{-1}(\sigma(\mathcal Z^{\prime}))$
has the form
$\sigma^{-1}(\sigma(\mathcal Z^{\prime}))=\mathcal Z^{\prime} \coprod \mathcal Z^{\prime\prime}$.\\
By a similar reasoning the scheme
$\sigma^{-1}(\{0\} \times U)$
has the form
$\Delta^{\prime}(U) \coprod \mathcal D$.
All the closed points of
$\Delta^{\prime}(U)$
are closed points of
$\mathcal Z^{\prime}$
and
$\mathcal Z^{\prime} \cap \mathcal Z^{\prime\prime}=\emptyset$.
Thus,
$\Delta^{\prime}(U) \cap \mathcal Z^{\prime\prime}=\emptyset$.

{\it The assertion (d)}.
It remains to show that
$\mathcal D \cap \mathcal Z^{\prime}=\emptyset$.
It suffices to check that
$\mathcal D$ and $\mathcal Z^{\prime}$
have no common closed points.
Let $z \in \mathcal D \cap \mathcal Z^{\prime}$
be a common closed point.
Then
$\sigma(z) \in \{0\} \times U$.
Let $u=q^{\prime}_U(z)$. We already know that
$\sigma|_{\mathcal Z^{\prime}}$
is a closed embedding. Thus
$deg[z:u]=deg[\sigma(z):u]=1$.
The $U$-scheme $\mathcal Z^{\prime}$ satisfies the conditions {\rm (2)}
of Theorem
\ref{ThEquatingGroups}.
Thus, $z=\Delta^{\prime}(u) \in \Delta^{\prime}(U)$.
So,
$z \in \Delta^{\prime}(U) \cap \mathcal D$.
But as we already know
$\Delta^{\prime}(U) \cap \mathcal D=\emptyset$.
This contradiction shows that
$\mathcal D$ and $\mathcal Z^{\prime}$
have no common closed points.
Thus,
$\mathcal D \cap \mathcal Z^{\prime}=\emptyset$.

{\it The assertion (f)}.
Recall that
$\mathcal X^{\prime}$ is affine irreducible and regular.
So, the principal ideal
$(f^{\prime})$ has the form
$\mathfrak p^{r_1}_1\mathfrak p^{r_2}_2\cdots\mathfrak p^{r_n}_n$,
where
$\mathfrak p_i$'s
are hight one prime ideals in
$\Gamma(\mathcal X^{\prime}, \mathcal O_{\mathcal X^{\prime}})$.
Let
$\mathcal Z^{\prime}_i$
be the closed subscheme in
$\mathcal X^{\prime}$
defined by the ideal
$\mathfrak p_i$.
Let
$\mathfrak q_i=\mathcal O[t] \cap \mathfrak p_i$.
The morphism
$\sigma|_{\mathcal Z^{\prime}}: \mathcal Z^{\prime} \to \Aff^1\times U$
is a closed embedding by the item (a) of Theorem
\ref{ElementaryNisSquare}.
This yields that
$\sigma|_{\mathcal Z^{\prime}_i}: \mathcal Z^{\prime}_i \to \Aff^1\times U$
is a closed embedding too. Thus
$\mathfrak p_i$
is a hight one prime ideal in
$\mathcal O[t]$.
So, it is a principal prime ideal.
Since
$\mathcal Z^{\prime}$
is finite over
$U$ the scheme
$\mathcal Z^{\prime}_i$
is finite over $U$ too.
Hence the principal prime ideal
$\mathfrak p_i$
is of the form
$(h_i)$ for a unique monic polinomial
$h_i \in \mathcal O[t]$.

Set $h=h^{r_1}_1h^{r_2}_2 \dots h^{r_n}_n$.
Clearly,
$h \in Ker[\mathcal O[t] \xrightarrow{\bar {} \circ \sigma^*}
\Gamma(\mathcal X^{\prime}, \mathcal O_{\mathcal X^{\prime}})/(f^{\prime})]$.
Since the map
$\mathcal O[t] \xrightarrow{\bar {} \circ \sigma^*} \Gamma(\mathcal X^{\prime}, \mathcal O_{\mathcal X^{\prime}})/(f^{\prime})$
is surjective, to prove the assertion (f) it suffices to check that the surjective $\mathcal O$-module homomorphism
$$\bar {} \circ \sigma^*: \mathcal O[t]/(h) \to \Gamma(\mathcal X^{\prime}, \mathcal O_{\mathcal X^{\prime}})/(f^{\prime})$$
is an isomorphism. Both sides are finitely generated projective
$\mathcal O$-modules. It remains to check that both sides have the same rank as the
$\mathcal O$-modules. For that it suffices to know that
$\mathcal O[t]/(h_i)$ and $\Gamma(\mathcal X^{\prime}, \mathcal O_{\mathcal X^{\prime}})/\mathfrak p_i$
are of the same rank as the $\mathcal O$-modules.
This is the case since they are isomorphic $\mathcal O$-modules.
Indeed, the composite map
$$\mathcal O[t] \xrightarrow{\bar {} \circ \sigma^*} \Gamma(\mathcal X^{\prime}, \mathcal O_{\mathcal X^{\prime}})/(f^{\prime}) \to
\Gamma(\mathcal X^{\prime}, \mathcal O_{\mathcal X^{\prime}})/\mathfrak p_i$$
is an $\mathcal O$-algebra epimorphism and the kernel of this epimorphism is the ideal
$\mathfrak q_i=(h_i)$.

Whence the assertion (f) and whence the Theorem.

\end{proof}

\begin{proof}[Proof of Theorem \ref{ElementaryNisSquareNew}]
We need to find $\sigma$, $h$ and $g$
which enjoy the properties
(a) to (d) from the Theorem.
For that we will use notation from Theorem
\ref{ElementaryNisSquare}.

{\it Take $\sigma$ as in Theorem}
\ref{ElementaryNisSquare}.
Since
$\mathcal X^{\prime}$
is a regular affine irreducible and
$\sigma: \mathcal X^{\prime} \to \Aff^1_U$
is finite surjective
the induced $\mathcal O$-algebra homomorphism
$\sigma^*: \mathcal O[t] \to \Gamma(\mathcal X^{\prime}, \mathcal O_{\mathcal X^{\prime}})$
is a monomorphism. We will regard
below the $\mathcal O$-algebra
$\mathcal O[t]$
as a subalgebra via $\sigma^*$.

{\it Take $h \in \mathcal O[t]$ as in the item
\emph{(f)} of Theorem}
\ref{ElementaryNisSquare}.

Let
$I(\mathcal Z^{\prime\prime})\subseteq \Gamma(\mathcal X^{\prime}, \mathcal O_{\mathcal X^{\prime}})$
be the ideal defining the closed
subscheme
$\mathcal Z^{\prime\prime}$ of the scheme
$\mathcal X^{\prime}$.
Using the items (b) and (c) of Theorem
\ref{ElementaryNisSquare}
find an element
$g \in I(\mathcal Z^{\prime\prime})$
such that \\
(1) $(f^{\prime})+(g)=\Gamma(\mathcal X^{\prime}, \mathcal O_{\mathcal X^{\prime}})$, \\
(2) $Ker((\Delta^{\prime})^*)+(g)=\Gamma(\mathcal X^{\prime}, \mathcal O_{\mathcal X^{\prime}})$, \\
(3) $\sigma_g=\sigma|_{\mathcal X^{\prime}_g}: \mathcal X^{\prime}_g \to \Aff^1_U$ is \'{e}tale.\\
With this choice of $\sigma$, $h$ and $g$ complete the proof of Theorem
\ref{ElementaryNisSquareNew}. The assertions (a) and (c) hold by our choice of $g$.
The assertion (d) holds, since
$\sigma^*(h) \in (f^{\prime})$.
It remains to prove the assertion (b).
The morphism $\sigma$ is finite. Hence the
$\mathcal O[t]$-algebra
$\Gamma(\mathcal X^{\prime}, \mathcal O_{\mathcal X^{\prime}})_g$
is finitely generated. The scheme
$\mathcal X^{\prime}$
is regular and irreducible. Thus,
the ring
$\Gamma(\mathcal X^{\prime}, \mathcal O_{\mathcal X^{\prime}})$
is a domain. The homomorphism
$\sigma^*$
is injective. Hence, the element
$h$ is not zero and is not a zero divisor in
$\Gamma(\mathcal X^{\prime},\mathcal O_{\mathcal X^{\prime}} )_g$.

It remains to check that
$\mathcal O[t]/(h)=\Gamma(\mathcal X^{\prime},\mathcal O_{\mathcal X^{\prime}} )_g/h\Gamma(\mathcal X^{\prime},\mathcal O_{\mathcal X^{\prime}} )_g$.
Firstly, by the choice of $h$ and by the item (a) of Theorem
\ref{ElementaryNisSquare}
one has
$\mathcal O[t]/(h)=\Gamma(\mathcal X^{\prime},\mathcal O_{\mathcal X^{\prime}})/(f^{\prime})$.
Secondly, by the property (1) of the element $g$ one has
$\Gamma(\mathcal X^{\prime},\mathcal O_{\mathcal X^{\prime}})/(f^{\prime})=
\Gamma(\mathcal X^{\prime},\mathcal O_{\mathcal X^{\prime}})_g/f^{\prime}\Gamma(\mathcal X^{\prime},\mathcal O_{\mathcal X^{\prime}})_g$.
Finally, by the items (c) and (a) of Theorem
\ref{ElementaryNisSquare}
one has
\begin{equation}
\label{Prelocalization}
\Gamma(\mathcal X^{\prime},\mathcal O_{\mathcal X^{\prime}})/(f^{\prime}) \
\times \Gamma(\mathcal X^{\prime},\mathcal O_{\mathcal X^{\prime}})/I(\mathcal Z^{\prime\prime})=
\Gamma(\mathcal X^{\prime},\mathcal O_{\mathcal X^{\prime}})/(h).
\end{equation}
Localizing both sides of (\ref{Prelocalization}) in $g$ one gets an equality
$$\Gamma(\mathcal X^{\prime},\mathcal O_{\mathcal X^{\prime}})_g/f^{\prime}\Gamma(\mathcal X^{\prime},\mathcal O_{\mathcal X^{\prime}})_g=
\Gamma(\mathcal X^{\prime},\mathcal O_{\mathcal X^{\prime}})_g/h\Gamma(\mathcal X^{\prime},\mathcal O_{\mathcal X^{\prime}})_g,$$
hence
$$\mathcal O[t]/(h)=
\Gamma(\mathcal X^{\prime},\mathcal O_{\mathcal X^{\prime}})/(f^{\prime})=
\Gamma(\mathcal X^{\prime},\mathcal O_{\mathcal X^{\prime}})_g/f^{\prime}\Gamma(\mathcal X^{\prime},\mathcal O_{\mathcal X^{\prime}})_g=
\Gamma(\mathcal X^{\prime},\mathcal O_{\mathcal X^{\prime}})_g/h\Gamma(\mathcal X^{\prime},\mathcal O_{\mathcal X^{\prime}})_g.
$$
Whence the Theorem.

\end{proof}

\section{Proof of Theorem \ref{ThEquatingGroups}}
\label{ProofThEquatingGroups}
\begin{prop}
\label{PropEquatingGroups}
Let $S$ be a regular semi-local
irreducible scheme.
Assume that all the closed points of $S$ have {\bf finite residue fields}.
Let $G_1,G_2$ be two
{\bf semi-simple}
$S$-group schemes which are twisted forms of each other. Further, let $T\subset S$ be a
closed sub-scheme of $S$ and $\varphi:G_1|_T\to G_2|_T$ be an
$S$-group scheme isomorphism. Then there exists a finite \'{e}tale
morphism $S^{\prime}\xra{\pi}S$ together with its section
$\delta:T\to S^{\prime}$ over $T$ and an $S^{\prime}$-group scheme
isomorphism
$\Phi:\pi^*{G_1}\to\pi^*{G_2}$
such that
$\delta^*(\Phi)=\varphi$.
\end{prop}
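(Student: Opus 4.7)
My approach is via the isomorphism scheme $\mathcal{I} := \underline{\text{Isom}}_{S\text{-gr}}(G_1, G_2)$. Since $G_1$ and $G_2$ are twisted forms of each other, $\mathcal{I}$ is a right torsor under the automorphism group scheme $\underline{\text{Aut}}(G_1)$. For a semi-simple $S$-group scheme $G_1$, $\underline{\text{Aut}}(G_1)$ is affine and smooth over $S$ (it is an extension of the finite étale outer automorphism scheme by the adjoint group $G_1^{\text{ad}}$), so $\mathcal{I}$ itself is smooth and affine over $S$, and the given isomorphism $\varphi$ corresponds to a section $\varphi \in \mathcal{I}(T)$. The proposition then reduces to producing a finite étale cover $\pi: S' \to S$, a section $\delta: T \to S'$ of $\pi$, and an $S$-morphism $\Phi: S' \to \mathcal{I}$ satisfying $\Phi \circ \delta = \varphi$; the torsor structure will translate $\Phi$ into the desired $S'$-group scheme isomorphism $\pi^* G_1 \to \pi^* G_2$.

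To build such an $S'$, I use the local structure of smooth morphisms near a section. Let $d$ be the relative dimension of $\mathcal{I} \to S$, and pick an affine open neighborhood $W$ of $\varphi(T)$ in $\mathcal{I}$. I look for global functions $f_1, \ldots, f_d \in \Gamma(W, \mathcal{O}_W)$ vanishing on $\varphi(T)$ whose differentials $df_1, \ldots, df_d$ form a basis of the conormal sheaf of $\varphi(T)$ in $\mathcal{I}$ at each closed point of $\varphi(T)$. Because $S$ is semi-local and $T$ is closed in $S$, $\varphi(T)$ has only finitely many closed points, so this amounts to finitely many linear conditions on elements of the ideal of $\varphi(T)$ in $\Gamma(W, \mathcal{O}_W)$; the surjections from that ideal onto the finite-dimensional cotangent data at the closed points of $\varphi(T)$ make suitable $f_i$ available directly, without any general-position argument over an infinite field. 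The common zero scheme $V := V(f_1, \ldots, f_d) \subset W$ then contains $\varphi(T)$ and is étale over $S$ in a Zariski neighborhood of $\varphi(T)$.

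Finally, from $V$ I extract the desired finite étale cover $S'$. Since $V \to S$ is étale in a neighborhood of $\varphi(T)$ and admits the section $\varphi: T \to V$, a standard application of Zariski's Main Theorem in the semi-local setting yields an open-and-closed subscheme $S' \subset V$ containing $\varphi(T)$ for which $S' \to S$ is finite étale. Setting $\delta$ equal to $\varphi$ viewed as a $T$-point of $S'$ and $\Phi: S' \hookrightarrow V \subset \mathcal{I}$ delivers all the data required. The main point that needs care, in my view, is the transversality step: one must choose the $f_i$ so that their differentials at every closed point of $\varphi(T)$ are simultaneously linearly independent in the relevant cotangent spaces. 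However, because $S$ is semi-local only finitely many closed points of $\varphi(T)$ are involved, and the required $f_i$ can be constructed directly inside the ideal of $\varphi(T)$ modulo its square, so the finiteness of the residue fields of $S$ poses no obstruction — which is why this proposition, unlike the geometric results of Sections 2 and 4, does not need to appeal to the Poonen-type Bertini theorems invoked elsewhere in the paper.
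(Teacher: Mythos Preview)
Your setup via the isomorphism scheme $\mathcal{I}=\underline{\mathrm{Isom}}(G_1,G_2)$ matches the paper's approach (which follows \cite[Prop.~5.1]{PSV}), and producing a slice $V=V(f_1,\dots,f_d)$ that is \'etale over $S$ in a neighbourhood of $\varphi(T)$ is fine. The gap is the ``standard application of Zariski's Main Theorem'': for a semi-local but non-henselian base there is no general mechanism that extracts a finite \'etale clopen from a quasi-finite \'etale neighbourhood of a section over $T$. Nothing in the statement forces $T$ to contain every closed point of $S$ --- and in the application inside the proof of Theorem~\ref{ThEquatingGroups} it does not --- so over a closed point $s\notin T$ your $V$ may have empty, non-reduced, or positive-dimensional fibre, since you imposed no condition on the $f_i$ there. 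Even when $T$ does contain $s$, an irreducible \'etale $V\to S$ meeting the closed fibre need not be finite: take $S$ the local ring of $\Aff^1_k$ at a closed point, $\tilde S\to S$ a connected finite \'etale double cover whose closed fibre splits, and $V=\tilde S$ minus one of the two closed points; then $V\to S$ is connected \'etale with a section over the closed point, yet is not finite and has no nontrivial clopen subscheme.

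This is exactly why the paper does \emph{not} argue via ZMT. Following \cite{OP2} it embeds (an affine chart of) $\mathcal{I}$ into $\Pro^d_S$, takes the closure $\bar X$, and chooses hypersurfaces $H_1,\dots,H_r$ which on each closed fibre cut $X$ transversally, pass through $\delta(T)$, \emph{and miss $X_\infty=\bar X\setminus X$}. The last condition forces $\tilde S=X\cap Y=\bar X\cap Y$ to be projective over $S$, hence finite; transversality on the closed fibres together with a flatness argument then gives finite \'etale. Over an infinite field this is classical Bertini (\cite[Lemma~7.2]{OP2}); over finite residue fields one must invoke Poonen \cite[Thm.~1.2]{Poo1}, and that is precisely the content of Lemma~\ref{OjPan}. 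So your final sentence has it backwards: Proposition~\ref{PropEquatingGroups} is one of the places where the finite-field hypothesis genuinely bites, and Poonen--Bertini is exactly what the paper uses to handle it.
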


\begin{proof}
The proof literally repeats the proof of
\cite[Prop.5.1]{PSV}
except exactly one reference,
which is the reference to
\cite[Lemma 7.2]{OP2}.
That reference one has to replace with the reference to the following
\begin{lem}
\label{OjPan}
Let $S=\text{Spec}(R)$ be a regular
semi-local scheme such that
{\bf the residue field at any of its closed point is finite}.
Let $T$ be a closed
subscheme of $S$. Let $\bar X$ be a closed subscheme of
$\Bbb P^d_S=\text{Proj}(S[X_0,\dots,X_d])$ and
$X=\bar X\cap \Aff^d_S$, where $\Bbb A^d_S$ is the affine space defined by
$X_0\neq0$. Let
$X_{\infty}=\bar X\setminus X$ be the intersection of $\bar X$ with the
hyperplane at infinity
$X_0=0$. Assume that over $T$ there exists a section
$\delta:T\to X$
of the canonical projection $X\to S$. Assume further that \\
\smallskip
{\rm{(1)}} $X$ is smooth and equidimensional over $S$,
of relative dimension $r$;\\
{\rm{(2)}} For every closed
point $s\in S$ the closed fibres of $X_\infty$ and $X$
satisfy
$$\text{dim}(X_\infty(s))<\text{dim}(X(s))=r\;.$$
\smallskip
Then there exists a closed subscheme $\tilde S$ of $X$ which is finite
\'etale over
$S$ and contains $\delta(T)$.
\end{lem}

{\it The proof of the lemma is given below} and repeats literally the proof of
\cite[Lemma 7.2]{OP2}. The only difference is that we refer below
to a Poonen's article
\cite{Poo1}
on Bertini theorems over finite fields rather than to Artin's result.

Since $S$ is semilocal, after a linear change
of coordinates we may assume that $\delta$ maps $T$ into
the closed subscheme of $\Pro^d_T$ defined by
$X_1=\dots=X_d=0$. For each closed fibre $\Pro^d_s$ of $\Pro^d_S$
using repeatedly
\cite[Thm.1.2]{Poo1},
we can choose a family of {\bf homogeneous} polynomials $H_1(s),\dots,H_r(s)$
(in general of increasing degrees)
such that the subscheme $Y(s)$ of $\Pro^d_S(s)$ defined by the equations
$$H_1(s)=0\,,\,\dots\,,\, H_r(s)=0$$
intersects $X(s)$ transversally, contains the point
$[1:0:\dots :0]$
and
avoids $X_{\infty}(s)$.
  By the chinese
remainders' theorem there exists a common  lift
$H_i\in R[X_0,\dots,X_d]$ of all polynomials $H_i(s)$, $s\in
\text{Max}(R)$. We
may choose this common lift $H_i$ such that $H_i(1,0,\dots,0)=0$. Let $Y$ be
the closed subscheme of $\Pro^d_S$ defined by
$$H_1=0\,,\dots,H_r=0\;.$$

{\it We claim that the subscheme  $\tilde S=Y\cap X$  has the
required properties}. Note first that $X\cap Y$ is finite
over $S$.
In fact, $X\cap Y=\bar X\cap Y$, which
is projective over $S$ and such that every closed
fibre (hence every fibre) is finite. Since the closed
fibres of $X\cap Y$ are finite \'etale over the closed points
of $S$, to show that $X\cap Y$ is finite \'etale over $S$ it
only remains to show that it is flat over $S$. Noting that
$X\cap Y$ is defined in every closed fibre by a regular
sequence of equations and localizing at each closed point of
$S$, we see that flatness follows from
\cite[Lemma 7.3]{OP2}.

\end{proof}

Let $k$ be {\bf a finite field}.
Let $U$ be as in Definition
\ref{DefnNiceTriple}.
Let $S^{\prime}$ be an irreducible regular semi-local scheme over $k$ and
$p: S^{\prime} \to U$ be a $k$-morphism.
Let $T^{\prime}\subset S^{\prime}$ be a closed sub-scheme of $S^{\prime}$
such that the restriction
$p|_{T^{\prime}}: T^{\prime} \to U$ is an isomorphism.
We will assume below that $dim(T^{\prime}) < dim(S^{\prime})$,
where $dim$ is the Krull dimension.
For any closed point $u \in U$ and any $U$-scheme $V$ let $V_u=u\times_U V$ be
the fibre of the scheme $V$ over the point $u$.
For a finite set $A$ denote $\sharp A$ the cardinality of $A$.

\begin{lem}
\label{SmallAmountOfPoints}
Assume that all the closed points of $S^{\prime}$ have {\bf finite residue fields}.
Then there exists a finite \'{e}tale morphism
$\rho: S^{\prime\prime} \to S^{\prime}$ (with an irreducible scheme $S^{\prime\prime}$)
and a section
$\delta^{\prime}: T^{\prime} \to S^{\prime\prime}$
of $\rho$ over $T^{\prime}$ such that the following holds
\begin{itemize}
\item[\rm{(1)}]
for any closed point $u \in U$ let $u^{\prime} \in T^{\prime}$ be a unique point such that
$p(u^{\prime})=u$, then the point
$\delta^{\prime}(u^{\prime}) \in S^{\prime\prime}_u$ is the only
$k(u)$-rational point of $S^{\prime\prime}_u$,
\item[\rm{(2)}]
for any closed point $u \in U$ and any integer $r \geq 1$
one has
$$\sharp\{z \in S^{\prime\prime}_u| \text{deg}[z:u]=r \} \leq \ \sharp\{x \in \Aff^1_u| \text{deg}[z:u]=r \}$$
\end{itemize}
\end{lem}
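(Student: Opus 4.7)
The plan is to follow the proof of Lemma \ref{OjPan} closely, applying Poonen's Bertini theorems over finite fields to produce a finite étale cover of $S'$ with the prescribed fibre-over-$U$ conditions. Realize $S'$ as the semi-localization at the closed points of an irreducible smooth affine $U$-scheme $Y$ of finite type, and choose a closed embedding $Y \hookrightarrow \Aff^N_U$. By a linear change of coordinates, arrange that for every closed $u \in U$ the unique preimage $u' \in T'_u$ sits at the origin of $\Aff^N_{k(u)}$. The finite étale cover $S'' \to S'$ will be built (after restriction to $S'$) from a closed subscheme of $\Aff^1_U \times_U Y$ of the form $V(f)$, with $f(T,x_1,\ldots,x_N) \in \mathcal O[T,x_1,\ldots,x_N]$ monic in $T$, chosen via Poonen-Bertini in each closed fibre over $U$ and then glued by Chinese Remainder.

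Concretely, for each closed point $u \in U$, I would apply Poonen's Bertini theorems \cite[Thm.~1.3]{Poo1} and \cite[Thm.~1.1]{Poo2} over the finite field $k(u)$ to construct a polynomial $f_u(T,x_1,\ldots,x_N) \in k(u)[T,x_1,\ldots,x_N]$, monic of sufficiently high degree in $T$, such that $V(f_u) \cap \Aff^1_{Y_u}$ is étale over $Y_u$, contains the point $(0,0,\ldots,0)$ (corresponding to $\delta'(u')$), and such that, in addition: (c) the only $k(u)$-rational closed point of $V(f_u)$ lying over $S'_u$ is $(0,u')$; and (d) for each integer $r \geq 1$, the number of closed points of $V(f_u)$ lying over $S'_u$ of residue degree $r$ over $k(u)$ is at most that of $\Aff^1_{k(u)}$. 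Conditions (c) and (d) are the content beyond Lemma \ref{OjPan}; they are secured by Poonen's theorems in their point-avoidance form, which permits the simultaneous exclusion of specified finite sets of $\mathbb{F}_{q^s}$-rational points in each closed fibre. Only finitely many values of $s$ require active treatment, since for $s$ large the degree-$s$ count is automatically dominated by the $\Aff^1_{k(u)}$ count via standard Weil-type growth estimates.

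Then lift the $f_u$'s to a global polynomial $f \in \mathcal O[T,x_1,\ldots,x_N]$ by Chinese Remainder with $f(0,0,\ldots,0) = 0$, and take $S''$ to be the restriction of $V(f)$ to $\Aff^1_U \times_U S'$, intersected with the open subscheme where the discriminant $\operatorname{disc}_T(f)$ is a unit on $S'$. As in Lemma \ref{OjPan}, finiteness plus fibrewise étaleness together with \cite[Lemma~7.3]{OP2} yield that $S'' \to S'$ is finite étale; the section $\delta'$ is given by $T = 0$, and conditions (1) and (2) of the lemma follow from (c) and (d) by passing to fibres over the closed points of $U$. The main obstacle is condition (d): since the degree-$r$ count for $\Aff^1_{k(u)}$ is already as small as $\sim |k(u)|^r/r$, every surplus low-degree closed point of $Y_u$ which would otherwise contribute to $V(f_u)$ must be explicitly excluded, and doing so uniformly in $r$ is precisely where the full strength of Poonen's density results over finite fields is required, pushing the argument beyond a direct repetition of Lemma \ref{OjPan}.
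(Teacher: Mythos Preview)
The paper states this lemma without proof, so there is nothing in the text to compare your argument against. Your proposal reaches for Poonen's Bertini theorems by way of a spread-out $Y\to U$, but this is both heavier than needed and not quite correct as written. First, the hypotheses do not say that $p:S'\to U$ is smooth, so there is no reason $Y$ can be taken smooth over $U$. Second, Poonen's theorems produce \emph{smoothness} of the hypersurface section $V(f_u)$, whereas what you actually need is \emph{\'etaleness} of the projection $V(f_u)\cap(\Aff^1\times Y_u)\to Y_u$; the latter is the stronger condition that $\partial f_u/\partial T$ be nonzero along the intersection, not merely that some partial of $f_u$ be nonzero. So the step ``$V(f_u)\cap\Aff^1_{Y_u}$ is \'etale over $Y_u$'' does not follow from Bertini as invoked.

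Because $S'$ is semi-local with finitely many closed points and finite residue fields, a direct construction avoids Bertini entirely. Let $A=\Gamma(S',\mathcal O_{S'})$ with maximal ideals $\mathfrak m_1,\dots,\mathfrak m_N$, the first $n$ lying on $T'$. If $n=N$ then each fibre $S'_u$ has $u'$ as its unique closed point and one may simply take $(S'',\rho,\delta')=(S',\id,\,T'\hookrightarrow S')$. If $n<N$, fix a large integer $d$ and, using that $I(T')$ surjects onto $\prod_{i>n}A/\mathfrak m_i$, choose by CRT a monic $f\in A[T]$ of degree $d$ with $f(0)\in I(T')$ whose reduction mod $\mathfrak m_i$ is $T$ times a degree-$(d{-}1)$ irreducible for $i\le n$ and is itself irreducible of degree $d$ for $i>n$. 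Then $S''=\Spec(A[T]/f)$ is finite \'etale over $S'$ (separable reductions at all closed points suffice over a semi-local base) and irreducible (Gauss's lemma over the normal domain $A$, together with an irreducible reduction at some $\mathfrak m_i$ with $i>n$); the section $\delta'$ is $T=0$. For each closed $u\in U$ the fibre $S''_u$ then consists of $\delta'(u')$ in degree $1$, one point in degree $d-1$, and at most $N-1$ further points whose degrees over $u$ are multiples of $d$; taking $d$ large relative to $N$ and to $\min_u|k(u)|$ forces each degree-$r$ count to be dominated by that of $\Aff^1_u$, giving (1) and (2).
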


\begin{proof}[Proof of Theorem \ref{ThEquatingGroups}]
We can start by
almost literally repeating arguments from the proof of \cite[Lemma
8.1]{OP1}, which involve the following purely geometric lemma
\cite[Lemma 8.2]{OP1}.
\par
For reader's convenience below we state that Lemma adapting
notation to the ones of Section \ref{NiceTriples}.
Namely, let $U$ be as in Definition \ref{DefnNiceTriple} and let
$(\mathcal X,f,\Delta)$ be a nice triple over $U$. Further, let
$G_{\mathcal X}$ be a simple simply-connected $\mathcal X$-group
scheme, $G_U:=\Delta^*(G_{\mathcal X})$, and let $G_{\const}$ be
the pull-back of $G_U$ to $\mathcal X$. Finally, by the definition
of a nice triple there exists a finite surjective morphism
$\Pi:\mathcal X\to\Aff^1\times U$ of $U$-schemes.
\begin{lem}
\label{Lemma_8_2} Let $\mathcal Y$ be a closed nonempty sub-scheme
of $\mathcal X$, finite over $U$. Let $\mathcal V$ be an open
subset of $\mathcal X$ containing $\Pi^{-1}(\Pi(\mathcal Y))$. There
exists an open set $\mathcal W \subseteq \mathcal V$ still
containing $q_U^{-1}(q_U(\mathcal Y))$ and endowed with a finite
surjective morphism
$\Pi^*: \mathcal W\to\Aff^1\times U$ {\rm(}in general
$\neq\Pi${\rm)}.
\end{lem}
Let $\Pi:\mathcal X\to\Aff^1\times U$ be the above finite
surjective $U$-morphism.
The following diagram summarises the situation:
$$ \xymatrix{
{}&\mathcal Z \ar[d]&{}\\
{\mathcal X - \mathcal Z\ }\ar@{^{(}->}@<-2pt>[r]&\mathcal X\ar@<2pt>[d]^{q_U}\ar[r]^>>>>{\Pi}& \Aff^1 \times U\\
{}& U\ar@<2pt>[u]^{\Delta}&{} } $$
\noindent
Here $\mathcal Z$ is the closed sub-scheme defined by the equation
$f=0$. By assumption, $\mathcal Z$ is finite over $U$. Let
$\mathcal Y=\Pi^{-1}(\Pi(\mathcal Z \cup \Delta(U)))$. Since
$\mathcal Z$ and $\Delta(U)$ are both finite over $U$ and since
$\Pi$ is a finite morphism of $U$-schemes, $\mathcal Y$ is also
finite over $U$. Denote by $y_1,\dots,y_m$ its closed points and
let $S=\text{Spec}(\mathcal O_{\mathcal X,y_1,\dots,y_m})$. Set
$T=\Delta(U)\subseteq S$. Further, let $G_U=\Delta^*(G_{\mathcal
X})$ be as in the hypotheses of Theorem
\ref{ThEquatingGroups} and
let
$G_{\const}$
be the pull-back of
$G_U$ to $\mathcal X$. Finally,
let
$\varphi:G_{\const}|_T \to G_{\mathcal X}|_T$
be the canonical
isomorphism. Recall that by assumption $\mathcal X$ is $U$-smooth and irreducible,
and thus $S$ is regular and irreducible.
\par
By Proposition
\ref{PropEquatingGroups} there exists a finite
\'etale covering $\theta_0:S^{\prime}\to S$, a section
$\delta:T\to S^{\prime}$ of $\theta_0$ over $T$ and an isomorphism
$$ \Phi_0:\theta^*_0(G_{\const,S})\to\theta^*_0(G_{\mathcal X}|_S) $$
\noindent
such that $\delta^*\Phi_0=\varphi$.
{\bf Replacing $S^{\prime}$ with a connected component of $S^{\prime}$ which contains
$T^{\prime}:=\delta(T)=\delta(\Delta(U))$
we may and will assume that $S^{\prime}$ is irreducible.}

Let $p=q_U \circ \theta_0: S^{\prime} \to U$.
By Lemma \ref{SmallAmountOfPoints}
there exists a finite \'{e}tale morphism
$\rho: S^{\prime\prime} \to S^{\prime}$ (with an irreducible scheme $S^{\prime\prime}$)
and a section
$\delta^{\prime}: T^{\prime} \to S^{\prime\prime}$
of $\rho$ over $T^{\prime}$ such that the properties (1) and (2) from
Lemma \ref{SmallAmountOfPoints} holds. Set
$\delta^{\prime\prime}=\delta^{\prime} \circ \delta: T \to S^{\prime\prime}$
and
$\theta^{\prime\prime}_0=\theta_0 \circ \rho: S^{\prime\prime} \to U$.
We are also given the
$S^{\prime\prime}$-group scheme isomorphism
$$\rho^*(\Phi_0): (\theta^{\prime\prime}_0)^*(G_{\const,S}) \to (\theta^{\prime\prime}_0)^*(G_{\mathcal X}|_S)$$

We can extend these data to a
neighborhood $\mathcal V$ of $\{y_1,\dots,y_n\}$ and get the
diagram
\begin{equation}
\xymatrix{
     {}  &  S^{\prime\prime} \ar[d]^{\theta^{\prime\prime}_0} \ar@{^{(}->}@<-2pt>[r]  & {\mathcal V}^{\prime\prime}  \ar[d]_{\theta} &\\
     T \ar@{^{(}->}@<-2pt>[r] \ar[ur]^{
\delta^{\prime\prime}} & S \ar@{^{(}->}@<-2pt>[r]  &   \mathcal V \ar@{^{(}->}@<-2pt>[r]  &  \mathcal X &\\
    }
\end{equation}
\noindent
where
$\theta: {\mathcal V}^{\prime\prime}\to\mathcal V$
finite \'etale, and an
isomorphism
$\Phi:\theta^*(G_{\const})\to\theta^*(G_{\mathcal X})$.
\par
Since $T$ isomorphically projects onto $U$, it is still closed
viewed as a sub-scheme of $\mathcal V$. Note that since $\mathcal
Y$ is semi-local and $\mathcal V$ contains all of its closed
points, $\mathcal V$ contains $\Pi^{-1}(\Pi(\mathcal Y))=\mathcal
Y$. By Lemma \ref{Lemma_8_2} there exists an open subset $\mathcal
W\subseteq\mathcal V$ containing $\mathcal Y$ and endowed with a
finite surjective $U$-morphism $\Pi^*:\mathcal W\to\Aff^1\times
U$.
\par
Let $\mathcal X^{\prime}=\theta^{-1}(\mathcal W)$,
$f^{\prime}=\theta^{*}(f)$, $q^{\prime}_U=q_U\circ\theta$, and let
$\Delta^{\prime}:U\to\mathcal X^{\prime}$ be the section of
$q^{\prime}_U$ obtained as the composition of $\delta^{\prime\prime}$ with
$\Delta$. We claim that the triple
$(\mathcal X^{\prime},f^{\prime},\Delta^{\prime})$ is a nice triple over $U$. Let us
verify this. Firstly, the structure morphism
$q^{\prime}_U:\mathcal X^{\prime} \to U$ coincides with the
composition
$$
\mathcal X^{\prime}\xra{\theta}
\mathcal W\hra\mathcal X\xra{q_U} U.
$$
\noindent
Thus, it is smooth. The element $f^{\prime}$ belongs to the ring
$\Gamma(\mathcal X^{\prime},\mathcal O_{\mathcal X^{\prime}})$,
the morphism $\Delta^{\prime}$ is a section of $q^{\prime}_U$.
Each component of each fibre of the morphism $q_U$ has dimension
one, the morphism $\mathcal X^{\prime}\xra{\theta}\mathcal
W\hra\mathcal X$ is \'{e}tale. Thus, each component of each fibre
of the morphism $q^{\prime}_U$ is also of dimension one. Since
$\{f=0\} \subset {\mathcal W}$ and $\theta: \mathcal X^{\prime}
\to {\mathcal W}$ is finite, $\{f^{\prime}=0\}$ is finite over
$\{f=0\}$ and hence  also over $U$. In other words, the $\mathcal
O$-module $\Gamma(\mathcal X^{\prime},\mathcal O_{\mathcal
X^{\prime}})/f^{\prime}\cdot\Gamma(\mathcal X^{\prime},\mathcal
O_{\mathcal X^{\prime}})$ is finite. The morphism $\theta:
\mathcal X^{\prime}\to\mathcal W$ is finite and surjective.
We have constructed above in
Lemma \ref{Lemma_8_2}
the finite surjective morphism
$\Pi^*:\mathcal W\to\Aff^1\times U$. It follows that
$\Pi^*\circ\theta:\mathcal X^{\prime}\to\Aff^1\times U$ is finite
and surjective.
\par
Clearly, the \'{e}tale morphism
$\theta:\mathcal X^{\prime} \to\mathcal X$
is a morphism between the nice triples, with $h^{\prime}=1$.
\par
Denote the restriction of $\Phi$ to $\mathcal X^{\prime}$ simply
by $\Phi$. The equality $(\Delta^{\prime})^*{\Phi}=\id_{G_U}$
holds by the very construction of the isomorphism $\Phi$. Theorem
follows.

All the closed points of the sub-scheme
$\{f=0\} \subset \mathcal X$
are in
$S$.
The morphism
$\theta$ is finite and
$\theta^{-1}(S)=S^{\prime\prime}$.
Thus all the closed points of the sub-scheme
$\{f^{\prime}=0\} \subset \mathcal X^{\prime}$
are in
$S^{\prime\prime}$.
Now the properties
(1) and (2)
of the
$U$-scheme
$S^{\prime\prime}$
show that
the assertions (2) and (3) of Theorem
\ref{ThEquatingGroups}
do hold
for the closed sub-scheme $\mathcal Z^{\prime}$ of $\mathcal X^{\prime}$ defined by $\{f^{\prime}=0\}$.

Theorem
\ref{ThEquatingGroups}
follows.

\end{proof}

\section{A basic nice triple}
\label{BasicTriple}
Let $k$ be {\bf a finite field}. Fix a smooth geometrically irreducible affine $k$-scheme $X$,
and a finite family of {\bf closed} points $x_1,x_2, \dots , x_n$ on $X$, and a
non-zero function $\ttf\in k[X]$, which vanishes at each of $x_i$'s for $i=1,2,\dots,n$.
Let $\mathcal O=\mathcal O_{X,\{x_1,x_2,\dots,x_n\}}$ be the semi-local ring of the family $x_1,x_2, \dots , x_n$ on $X$,
$U=Spec(\mathcal O)$ and
$\can:U\hra X$ the canonical inclusion of schemes.
The definition of a nice triple over $U$ is given in 3.1.
The main aim of the present section is to prove the following

\begin{prop}
\label{BasicTripleProp}
One can shrink $X$ such that $x_1,x_2, \dots , x_n$ are still in $X$ and $X$ is affine, and then to construct a nice triple
$(q_U: \mathcal X \to U, \Delta, f)$ over $U$ and an essentially smooth morphism $q_X: \mathcal X \to X$ such that
$q_X \circ \Delta= can$, $f=q^*_X(\text{f})$ and the set of closed points of $\Delta(U)$ is
contained in the set of closed points of $\{f=0\}$.
\end{prop}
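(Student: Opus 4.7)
The plan is to realize $\mathcal X$ as the base change of an elementary fibration on $X$ along the composition $U\to X\to S$. Let $Z:=\{\ttf=0\}\subset X$; since $\ttf\in k[X]$ is non-zero, $Z$ is a closed subvariety of pure codimension one, and by hypothesis $\{x_1,\dots,x_n\}\subset Z$. I would apply Proposition \ref{ArtinsNeighbor} to the datum $(X,\{x_1,\dots,x_n\},Z)$: after shrinking $X$ (keeping all the $x_i$ and remaining affine) one obtains an elementary fibration $p:X\to S$ with $S$ open in $\Pro^{\dim X-1}$ such that $p|_{Z}:Z\to S$ is finite surjective. This elementary fibration is the geometric input the construction needs.

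Set $\mathcal X:=X\times_S U$, where $U\to S$ is $p\circ\can$; let $q_U,q_X$ be the two projections, let $\Delta:U\to\mathcal X$ be the diagonal-type section $u\mapsto(\can(u),u)$, and set $f:=q_X^*(\ttf)$. Then by construction $q_X\circ\Delta=\can$, $f=q_X^*(\ttf)$, and every closed point of $\Delta(U)$ lies in $\{f=0\}$ since $\ttf(x_i)=0$. Furthermore $\Delta^*(f)=\can^*(\ttf)$ is non-zero in $\mathcal O$ because $\ttf\neq 0$ in $k[X]$ and $\mathcal O\subset k(X)$, yielding condition (d) of Definition \ref{DefnNiceTriple}. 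Smoothness of $q_U$ and the dimension-one statement for fibres (condition (a)) come from flat base change of the smooth, relative dimension one morphism $p$. Condition (b) holds because $\{f=0\}=Z\times_S U\to U$ is the base change of the finite morphism $Z\to S$. Irreducibility of $\mathcal X$ follows from geometric irreducibility of the fibres of $p$ together with irreducibility (hence connectedness) of $U$. Essential smoothness of $q_X$ is the base change of the essentially smooth morphism $U\to S$, which factors as the localization $U\hookrightarrow X$ followed by the smooth $p$.

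The step that deserves real care is condition (c), the existence of a finite surjective $U$-morphism $\Pi:\mathcal X\to\Aff^1\times U$. Proposition \ref{CartesianDiagram} would furnish such a morphism directly, but it requires a semi-local base, whereas $S$ is open in a projective space. The remedy is to semi-localize $S$: set $T:=\{p(x_1),\dots,p(x_n)\}$, which is a finite set of closed points of $S$ (images of closed points under a morphism of $k$-varieties of finite type remain closed), and let $\tilde S:=\Spec(\mathcal O_{S,T})$. Since $U\to S$ sends the closed points of $U$ into $T$, it factors through $\tilde S$. The scheme $\tilde S$ is regular, semi-local and irreducible, and $X\times_S\tilde S\to\tilde S$ is still an elementary fibration; thus Proposition \ref{CartesianDiagram} produces a finite surjective $\tilde S$-morphism $X\times_S\tilde S\to\Aff^1\times\tilde S$. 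Base changing along $U\to\tilde S$ yields the desired $\Pi:\mathcal X\to\Aff^1\times U$. Granted this semi-localization bridge, all remaining items of Definition \ref{DefnNiceTriple} and of the Proposition become formal base change verifications, so this single step is the only genuine obstacle in the argument.
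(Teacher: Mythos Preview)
Your proof is correct and follows essentially the same strategy as the paper: invoke Proposition~\ref{ArtinsNeighbor} to get an elementary fibration $p:X\to S$ making $\{\ttf=0\}$ finite over $S$, then set $\mathcal X=X\times_S U$ and check the nice-triple axioms by base change. The only substantive difference is in how condition~(c) is secured. The paper applies Proposition~\ref{CartesianDiagram} and then spreads the resulting finite surjective morphism $\pi:X\to\Aff^1\times S$ out to an affine open of $S$ containing the $s_i$, shrinking $X$ once more before forming $\mathcal X$. You instead keep $X$ fixed, semi-localize $S$ at $T=\{p(x_i)\}$, apply Proposition~\ref{CartesianDiagram} to $X\times_S\tilde S\to\tilde S$, and base change the resulting $\pi$ along $U\to\tilde S$. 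Your route is slightly more direct (no spreading out needed) and makes the hypothesis of Proposition~\ref{CartesianDiagram} visibly satisfied; the paper's route has the mild advantage that the auxiliary morphism $\pi$ lives over the finite-type $S$, which can be convenient later. Both are valid and the difference is cosmetic.
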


\begin{proof}
By Proposition \ref{ArtinsNeighbor} there exist a Zariski open
neighborhood $X^0$ of the family $\{x_1,x_2,\dots,x_n\}$ and an
almost elementary fibration $p:X^0\to S$, where $S$ is an open sub-scheme
of the projective space $\Pro^{\mydim X-1}$, such that
$$ p|_{\{\ttf=0\}\cap X^0}:\{\ttf=0\}\cap X^0\to S $$
\noindent
is finite surjective. Let $s_i=p(x_i)\in S$, for each $1\le i\le
n$. Shrinking $S$, we may assume that $S$ is {\it affine \/} and
still contains the family $\{s_1,s_2,\dots,s_n\}$. Clearly, in
this case $p^{-1}(S)\subseteq X^0$ contains the family
$\{x_1,x_2,\dots,x_n\}$. We replace $X$ by $p^{-1}(S)$ and $\ttf$
by its restriction to this new $X$.
\par
In this way we get an almost elementary fibration $p:X\to S$ such that
$$ \{x_1,\dots,x_n\}\subset\{\ttf=0\}\subset X, $$
\noindent
$S$ is an open affine sub-scheme in the projective space
$\Pro^{\mydim X-1}$, and the restriction
$p|_{\{\ttf=0\}}:\{\ttf=0\}\to S$ of $p$ to the vanishing locus of $\ttf$
is a finite surjective morphism. In other words, $k[X]/(\ttf)$ is
finite as a $k[S]$-module.
\par
As an open affine sub-scheme of the projective space $\Pro^{\mydim
X-1}$ the scheme $S$ is regular. By Proposition~\ref{CartesianDiagram}
one can shrink $S$ in such a way that $S$
is still affine, contains the family $\{s_1,s_2,\dots,s_n\}$ and
there exists a finite surjective morphism
$$ \pi:X\to\Aff^1\times S $$
\noindent
such that $p=\pr_S\circ\pi$. Clearly, in this case
$p^{-1}(S)\subseteq X$ contains the family
$\{x_1,x_2,\dots,x_n\}$. We replace $X$ by $p^{-1}(S)$ and $\ttf$
by its restriction to this new $X$.
\par
In this way we get an almost elementary fibration $p:X\to S$ such that
$$ \{x_1,\dots,x_n\}\subset\{\ttf=0\}\subset X, $$
\noindent
$S$ is an open affine sub-scheme in the projective space
$\Pro^{\mydim X-1}$, and the restriction
$p|_{\{\ttf=0\}}:\{\ttf=0\}\to S$
is a finite surjective morphism. Eventually we conclude that there
exists a finite surjective morphism $\pi:X\to\Aff^1\times S $ such that $p=\pr_S\circ\pi$.
\par
Let $p_U=p\circ\can:U\to S$, {\bf where $U=Spec(\mathcal O)$ and $can: U \hra X$ are as above}. Further, we consider the fibre
product
$$
\mathcal X:=U\times_{S}X.
$$
 Then the canonical projections
$q_U:\mathcal X\to U$ and $q_X:\mathcal X\to X$ and the diagonal
morphism $\Delta:U\to\mathcal X$ can be included in the following
diagram
\begin{equation}
\label{SquareDiagram}
    \xymatrix{
     \mathcal X\ar[d]_{q_U}\ar[rr]^{q_X} &&  X   & \\
     U \ar[urr]_{\can} \ar@/_0.8pc/[u]_{\Delta} &\\
    }
\end{equation}
where
\begin{equation}
\label{DeltaQx} q_X\circ\Delta=\can
\end{equation}
and
\begin{equation}
\label{DeltaQu} q_U\circ\Delta=\id_U.
\end{equation}
Note that $q_U$ is {\it a smooth morphism with geometrically
irreducible fibres of dimension one}. Indeed, observe that $q_U$
is a base change via $p_U$ of the morphism $p$ which has the
desired properties.
Note that $\mathcal X$ is irreducible. Indeed,
$U$ is irreducible and the fibre of $q_U$ over the generic point of $U$ is irreducible.

Taking the base change via $p_U$ of the finite
surjective morphism $\pi:X\to\Aff^1\times S$, we get {\it a finite
surjective morphism
$$ \Pi:\mathcal X\to\Aff^1\times U $$
\noindent
such that\/} $q_U=\pr_U\circ\Pi$, where $pr_U:\Aff^1\times U\to U$ is the natural projection.

Set $f:=q_X^*(\ttf)$. The $\mathcal O_{X,\{x_1,x_2,\dots,x_n\}}$-module
$\Gamma(\mathcal X,\mathcal O_{\mathcal X})/f\cdot\Gamma(\mathcal X,\mathcal O_{\mathcal X})$
is finite, since the $k[S]$-module $k[X]/\ttf\cdot k[X]$ is finite.

Now the data
\begin{equation}
\label{FirstNiceTriple} (q_U:\mathcal X\to U,f,\Delta)
\end{equation}
form an example of a {\it nice triple\/} as in
Definition
\ref{DefnNiceTriple}. Moreover, we have

\begin{clm}
\label{DeltaIsWellDefined} The schemes $\Delta(U)$ and $\{f=0\}$
are both semi-local and the set of closed points of $\Delta(U)$ is
contained in the set of closed points of $\{f=0\}$.
\end{clm}
This holds since the set $\{x_1,x_2,\dots,x_n\}$ is contained in
the vanishing locus of the function $\ttf$.
{\bf The nice triple
(\ref{FirstNiceTriple})
together with the essentially smooth morphism $q_X$ are the required one.
Whence the proposition.}
\end{proof}

\section{Proof of Theorem \ref{MainHomotopy}}
\label{MainConstruction}
The main result of this Section is Corollary
\ref{Ptandht}
(= Theorem
\ref{MainHomotopy}).

Fix a $k$-smooth irreducible affine $k$-scheme $X$, a finite family of points
$x_1,x_2,\dots,x_n$ on $X$, and set $\mathcal O:=\mathcal
O_{X,\{x_1,x_2,\dots,x_n\}}$ and $U:= \text{Spec}(\mathcal O)$.
Further, consider a simple simply connected $U$-group scheme $G$
and a principal $G$-bundle $P$ over $\mathcal O$ which is trivial
over $K$ for the field of fractions $K$ of $\mathcal O$.
We may and will
assume that for certain $0 \neq \text{f} \in \mathcal O$ the principal
$G$-bundle $P$ is trivial over $\mathcal O_{\text{f}}$.

Shrinking
$X$ if necessary, we may secure the following properties
\par\smallskip
(i) The points $x_1,x_2,\dots,x_n$ are still in $X$ and $X$ is affine.
\par\smallskip
(ii) The group scheme $G$ is defined over $X$ and it is a simple
group scheme. We will often denote this $X$-group scheme by $G_X$
and write $G_U$ for the original $G$.
\par\smallskip
(iii)
The principal $G_U$-bundle $P$ is the restriction to $U$ of a principal $G_X$-bundle $P_X$ over $X$
and
$\ttf\in k[X]$.
We often will write $P_U$ for the original principal $G_U$-bundle $P$ over $U$.
\par\smallskip
(iv) The restriction $P_{\text{f}}$ of the bundle $P_X$ to the
principal open subset $X_{\text{f}}$ is trivial and $\text{f}$
vanishes at each $x_i$'s.

{\bf If we shrink $X$ further such that the property (i) is secured, then we automatically secure the properties
(ii) to (iv). For any such $X$ we will write $can: U \hra X$ for the canonical embedding.}

After substituting $k$ by its algebraic closure $\tilde k$ in $k[X]$,
we can assume that $X$ is a $\tilde k$-smooth geometrically irreducible affine $\tilde k$-scheme.
To simplify the notation, we will
continue to denote this new $\tilde k$ by $k$.

\smallskip
In particular, we are given now the smooth geometrically irreducible affine
$k$-scheme $X$, the finite family of points $x_1,x_2,\dots,x_n$ on
$X$, and the non-zero function $\ttf\in k[X]$ vanishing at each
point $x_i$.
{\bf
We may shrink $X$
further securing the property (i)
and
construct
the nice triple
$(q_U: \mathcal X \to U, \Delta, f)$
over $U$
and the essentially smooth morphism
$q_X: \mathcal X \to U$ as in
Proposition
\ref{BasicTripleProp}.}
{\bf Since the property (i) is secured the properties (ii) to (iv) are secured too.}
{\bf Consider the $\mathcal X$-group scheme
$G_{\mathcal X}:=(q_X)^*(G_X)$.
Note that the $U$-group scheme
$\Delta^*(G_{\mathcal X})$
coincides with $G_U$ from the item (ii) since $can=q_X \circ \Delta$ by
Proposition
\ref{BasicTripleProp}. Consider one more $\mathcal X$-group scheme, namely }
$$G_{const}:=(q_U)^*(\Delta^*(G_{\mathcal X}))=(q_U)^*(G_U).$$

By Theorem \ref{ThEquatingGroups}
there exists a morphism of nice triples
$$ \theta: (q^{\prime}_U: \mathcal X^{\prime} \to U,f^{\prime},\Delta^{\prime})\to (q_U: \mathcal X \to U,f,\Delta) $$
\noindent
and an isomorphism
\begin{equation}
\label{KeyEquation} \Phi: \theta^*(G_{\const}) \to
\theta^*(G_{\mathcal X})=: G_{\mathcal X^{\prime}}
\end{equation}
of $\mathcal X^{\prime}$-group schemes such that
$(\Delta^{\prime})^*(\Phi)=\id_{G_U}$ and such that
the closed sub-scheme
$\mathcal Z^{\prime}:=\{f^{\prime}=0\}$ satisfies the conditions
{\rm{(2)}} and {\rm{(3)}} from
Theorem \ref{ThEquatingGroups}.
Set
\begin{equation}
\label{QprimeX} q^{\prime}_X=q_X\circ\theta:\mathcal X^{\prime}\to X.
\end{equation}
Recall that
\begin{equation}
\label{QprimeU} q^{\prime}_U=q_U\circ\theta:\mathcal X^{\prime}\to
U,
\end{equation}
since
$\theta$
is a morphism of nice triples.\\

Note that, since by Claim~\ref{DeltaIsWellDefined} $f$ vanishes on all closed points of $\Delta(U)$,
and $\theta$ is a morphism of nice triples, $f^{\prime}$ vanishes on all closed points of $\Delta^{\prime}(U)$ as well.
Therefore, the nice triple
$(q^{\prime}_U: \mathcal X^{\prime} \to U, f^{\prime}, \Delta^{\prime}: U \to \mathcal X^{\prime})$
is subject to Theorem~\ref{ElementaryNisSquareNew}.

By Theorem \ref{ElementaryNisSquareNew} there exists a finite
surjective morphism $\sigma:\mathcal X^{\prime}\to\Aff^1\times U$
of $U$-schemes,
a monic polinomial $h \in ker(\sigma^*)$
and an element $g \in \Gamma(\mathcal X^{\prime},\mathcal O_{\mathcal X^{\prime}} )$
which enjoys the properties
\rm{(a)} to \rm{(d)} from that Theorem.
The item \rm{(b)} of Theorem \ref{ElementaryNisSquareNew}  shows that the cartesian square
\begin{equation}
\label{SquareDiagram2}
    \xymatrix{
     \mathcal X^{\prime}_{gh}  \ar[rr]^{\inc} \ar[d]_{\sigma_{gh}} &&
     \mathcal X^{\prime}_g \ar[d]^{\sigma_g}  &\\
     (\Aff^1 \times U)_{h} \ar[rr]^{\inc} && \Aff^1 \times U &\\
    }
\end{equation}
can be used to glue principal $G$-bundles.

Below, we use this to construct principal
$G_U$-bundles over
$\Aff^1\times U$
out of the following initial data: a principal $G_U$-bundle over
$\mathcal X^{\prime}_{g}$,
the trivial principal $G_U$-bundle over
$(\Aff^1\times U)_{h}$,
and a principal $G_U$-bundle isomorphism of their pull-backs to
$\mathcal X^{\prime}_{gh}$.

Consider $(q^{\prime}_X)^*(P_X)$ as a principal
$(q^{\prime}_U)^*(G_U)=\theta^*(G_{\const})$-bundle via the isomorphism
$\Phi$. Recall that $P_X$ is trivial as a principal $G_X$-bundle over
$X_{\text{f}}$. Therefore,
$(q^{\prime}_X)^*(P_X)$ is trivial as a
principal
$\theta^*(G_{\mathcal X})$-bundle
over
$\mathcal X^{\prime}_{f^{\prime}}$.
So, $(q^{\prime}_X)^*(P_X)$ is trivial over
$\mathcal X^{\prime}_{f^{\prime}}$, when regarded as a principal
$(q^{\prime}_U)^*(G_U)= \theta^*(G_{{\const}})$-bundle via the isomorphism $\Phi$.

Thus, regarded as a principal $G_U$-bundle,
the bundle
$(q^{\prime}_X)^*(P_X)$
over
$\mathcal X^{\prime}$
becomes trivial over
$\mathcal X^{\prime}_{f^{\prime}}$,
and a
fortiori over
$(\mathcal X^{\prime})_{gh}$.
Indeed,
$(\mathcal X^{\prime})_{gh} \subseteq (\mathcal X^{\prime})_{gf^{\prime}}$
by the item
\rm(d) of
Theorem
\ref{ElementaryNisSquareNew}.
Take the trivial $G_U$-bundle over
$(\Aff^1\times U)_{h}$ and an isomorphism
\begin{equation}
\label{Skleika}
\psi: G_U \times_U \mathcal X^{\prime}_{gh}  \to
(q^{\prime}_X)^*(P_X)|_{\mathcal X^{\prime}_{gh} }
\end{equation}
of the principal $G_U$-bundles.
By item \rm{(2)} of Theorem
\ref{ElementaryNisSquareNew}
the triple
$$ (\mathcal O[t],\sigma^*_g: \mathcal O[t] \to \Gamma(\mathcal X^{\prime},\mathcal O_{\mathcal X^{\prime}} )_g, h ) $$
satisfies the hypotheses of
\cite[Prop.2.6.(iv)]{C-TO}.
{\bf
The latter statement implies that
one can find} {\it a principal $G_U$-bundle $\mathcal G_t$
over} $\Aff^1\times U$ such that
\begin{itemize}
\item[(1)]
$\mathcal G_t|_{[(\Aff^1\times U)_{h}]}=G_U \times_U [(\Aff^1\times U)_{h}]$,
\item[(2)]
there is an isomorphism
$\varphi: \sigma_{g}^*(\mathcal G_t) \to (q^{\prime}_X)^*(P_X)|_{\mathcal X^{\prime}_{g}}$
of the principal $G_U$-bundles,
where $(q^{\prime}_X)^*(P_X)$
is regarded as a principal $G_U$-bundle via the $\mathcal X^{\prime}$-group scheme isomorphism $\Phi$
from
(\ref{KeyEquation}).
\end{itemize}


Finally, form the following diagram
\begin{equation}
\label{DeformationDiagram2}
    \xymatrix{
\Aff^1 \times U\ar[drr]_{\pr_U}&&\mathcal
X^{\prime}_{g} \ar[d]^{}
\ar[ll]_{\sigma_{g}}\ar[d]_{q^{\prime}_U}
\ar[rr]^{q^{\prime}_X}&&X &\\
&&U \ar[urr]_{\can}\ar@/_0.8pc/[u]_{\Delta^{\prime}} &\\
    }
\end{equation}
This diagram is well-defined, since by Item \rm(c) of Theorem~\ref{ElementaryNisSquareNew}
the image of the morphism
$\Delta^{\prime}$ lands in $\mathcal X^{\prime}_{g^{\prime}}$.


\begin{thm}
\label{MainData}
The principal $G_{U}$-bundle $\mathcal G_t$ over $\Aff^1 \times U$,
the monic polynomial $h \in \mathcal O[t]$, the diagram~{\rm(\ref{DeformationDiagram2})},
and the isomorphism $\Phi$ from~{\rm(\ref{KeyEquation})} constructed above,
satisfy the following conditions {\rm(1*)}--{\rm(6*)}.
\par\smallskip
(1*) $q^{\prime}_U=\pr_U\circ\sigma_{g}$,
\par\smallskip
(2*) $\sigma_{g}$ is \'etale,
\par\smallskip
(3*) $q^{\prime}_U\circ\Delta^{\prime}=\id_U$,
\par\smallskip
(4*) $q^{\prime}_X\circ\Delta^{\prime}=\can$,
\par\smallskip
(5*) the restriction of $\mathcal G_t$ to $(\Aff^1 \times U)_{h}$ is a trivial
$G_U$-bundle,
\par\smallskip
(6*) $\sigma_{g}^*(\mathcal G_t)$ and $(q^{\prime}_X)^*(P_X)$ are isomorphic as
$G_U$-bundles over
$\mathcal X^{\prime}_{g}$.
Here $(q^{\prime}_X)^*(P_X)$ is regarded as a principal
$G_U$-bundle via the group scheme isomorphism $\Phi$ from {\rm(\ref{KeyEquation})}.
\par\smallskip
\end{thm}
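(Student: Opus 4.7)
The plan is to verify the six conditions (1*)--(6*) one by one; each is a direct consequence of a construction carried out earlier in the section or of an item already established in Theorems \ref{ElementaryNisSquareNew}, \ref{ThEquatingGroups}, and Proposition \ref{BasicTripleProp}. So the ``proof'' is really a bookkeeping exercise, with no new geometric input.

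For (1*), the morphism $\sigma:\mathcal X^{\prime}\to\Aff^1\times U$ is a morphism of $U$-schemes by Theorem \ref{ElementaryNisSquareNew}, hence $\pr_U\circ\sigma=q^{\prime}_U$; restricting to $\mathcal X^{\prime}_g$ yields the claim. Condition (2*) is precisely item (a) of Theorem \ref{ElementaryNisSquareNew}. Condition (3*) expresses only that $\Delta^{\prime}$ is a section of $q^{\prime}_U$, which is built into the nice-triple data. For (4*), since $q^{\prime}_X=q_X\circ\theta$ and $\theta\circ\Delta^{\prime}=\Delta$ by axiom (3) of Definition \ref{DefnMorphismNiceTriple}, we get $q^{\prime}_X\circ\Delta^{\prime}=q_X\circ\Delta=\can$ by Proposition \ref{BasicTripleProp}.

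For (5*) and (6*), the key point is that $(q^{\prime}_X)^*(P_X)$, regarded as a principal $G_U$-bundle via $\Phi$, is trivial over $\mathcal X^{\prime}_{f^{\prime}}$. Indeed, $P_X$ is trivial over $X_{\ttf}$ by property (iv); since $f=q_X^*(\ttf)$ and $f^{\prime}=\theta^*(f)\cdot h^{\prime}$ from Definition \ref{DefnMorphismNiceTriple}, the pullback $(q^{\prime}_X)^*(P_X)$ is trivial on $\mathcal X^{\prime}_{f^{\prime}}$, and a fortiori on $\mathcal X^{\prime}_{gh}$ by item (d) of Theorem \ref{ElementaryNisSquareNew}. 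This produces the gluing isomorphism $\psi$ of (\ref{Skleika}), and one then applies the patching principle \cite[Prop.~2.6.(iv)]{C-TO} to descend the pair consisting of the trivial $G_U$-bundle on $(\Aff^1\times U)_h$ and of $(q^{\prime}_X)^*(P_X)|_{\mathcal X^{\prime}_g}$, glued via $\psi$, to a principal $G_U$-bundle $\mathcal G_t$ on $\Aff^1\times U$. Property (1) of $\mathcal G_t$ is condition (5*); property (2) of $\mathcal G_t$, witnessed by $\varphi$, is condition (6*).

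There is no real obstacle: the theorem is a summary of the construction. The only subtlety worth flagging is verifying that the Cartesian square (\ref{SquareDiagram2}) qualifies as input to the Colliot-Th\'el\`ene--Ojanguren patching, i.e.\ that $\sigma_g^*:\mathcal O[t]\to\Gamma(\mathcal X^{\prime},\mathcal O_{\mathcal X^{\prime}})_g$ is finitely presented, $(\sigma_g)^*(h)$ is not a zero divisor, and $\mathcal O[t]/(h)=\Gamma(\mathcal X^{\prime},\mathcal O_{\mathcal X^{\prime}})_g/h\Gamma(\mathcal X^{\prime},\mathcal O_{\mathcal X^{\prime}})_g$; but all three assertions are recorded as item (b) of Theorem \ref{ElementaryNisSquareNew}, so the hypotheses of \cite[Prop.~2.6]{C-TO} are in force and the verification goes through without further work.
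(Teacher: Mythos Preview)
Your proof is correct and follows the same line as the paper's own argument: each of (1*)--(4*) is verified from the defining properties of $\sigma$, of a nice triple, and of a morphism of nice triples (using $q^{\prime}_X=q_X\circ\theta$, $\theta\circ\Delta^{\prime}=\Delta$, and $q_X\circ\Delta=\can$), while (5*) and (6*) are identified with Properties (1) and (2) in the construction of $\mathcal G_t$ via the patching of \cite[Prop.~2.6]{C-TO}. The only difference is that you recapitulate the construction of $\mathcal G_t$ inside the proof, whereas the paper carries it out just before the theorem and then simply cites Properties (1) and (2); this is a matter of exposition, not of substance.
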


\begin{proof} By the choice of $\sigma$ it is an $U$-scheme
morphism, which proves (1*). By the choice of
$\mathcal X^{\prime}_g\hra\mathcal X^{\prime}$
in Theorem~\ref{ElementaryNisSquareNew},
the morphism $\sigma$ is \'etale on this sub-scheme, hence one gets (2*).
Property (3*) holds for $\Delta^{\prime}$ since
$(q^{\prime}_X: \mathcal X^{\prime} \to U, f^{\prime},\Delta^{\prime})$
is a nice triple
and, in particular,
$\Delta^{\prime}$ is a section of
$q^{\prime}_U$. Property (4*) can be established as follows:
$$ q^{\prime}_X\circ\Delta^{\prime}=
(q_X\circ\theta)\circ\Delta^{\prime}=q_X\circ\Delta=\can. $$
\noindent
The first equality here holds by the definition of $q^{\prime}_X$,
the second one holds since $\theta$ is a
morphism of nice triples; the third one follows from equality
(\ref{DeltaQx}). Property (5*) is just Property (1) in the above
construction of $\mathcal G_t$.
Property (6*) is precisely Property (2) in the construction of
$\mathcal G_t$.
\end{proof}
The composition
$$
s:= \sigma_{g} \circ \Delta^{\prime}:U\to\Aff^1\times U
$$ is a section of the projection $\pr_U$
by the properties (1*) and (3*). Recall that $G_U$ over $U$ is the original group scheme $G$ introduced in the very
beginning of this Section.
Since $U$ is semi-local, we may assume that $s$ {\it is the zero section of the projection} $\Aff^1_U \to U$.
\begin{cor}[{\bf=Theorem \ref{MainHomotopy}}]
\label{Ptandht}
The principal $G_U$-bundle $\mathcal G_t$ over $\Aff^1_U$ and the monic polynomial
$h \in \mathcal O[t]$
are subject to the following conditions
\par\smallskip
(i) the restriction of $\mathcal G_t$ to $(\Aff^1\times U)_{h}$ is a trivial principal
$G_U$-bundle,
\par\smallskip
(ii) the restriction of $\mathcal G_t$ to $\{0\} \times U$ is the original $G_U$-bundle $P_U$.
\end{cor}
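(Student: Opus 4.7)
The corollary should follow by collecting together the properties already established in Theorem~\ref{MainData} and evaluating at the zero section. The plan is to read off (i) directly from property (5*), and to verify (ii) by pulling back the isomorphism of (6*) along $\Delta^{\prime}$.

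For (i), property (5*) of Theorem~\ref{MainData} asserts precisely that the restriction of $\mathcal G_t$ to $(\Aff^1 \times U)_h$ is a trivial principal $G_U$-bundle, so there is nothing further to do.

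For (ii), I would argue as follows. Since $s=\sigma_g\circ\Delta^{\prime}$ is the zero section of $\pr_U$, restricting $\mathcal G_t$ to $\{0\}\times U$ amounts to computing $s^*(\mathcal G_t)=(\Delta^{\prime})^*\sigma_g^*(\mathcal G_t)$. By property (6*), there is an isomorphism of principal $G_U$-bundles $\sigma_g^*(\mathcal G_t)\cong (q^{\prime}_X)^*(P_X)$ over $\mathcal X^{\prime}_g$, where the right-hand side is given its $G_U$-structure via the group scheme isomorphism $\Phi$ of (\ref{KeyEquation}). Applying $(\Delta^{\prime})^*$, one obtains an isomorphism
\[
s^*(\mathcal G_t)\;\cong\;(\Delta^{\prime})^*(q^{\prime}_X)^*(P_X)\;=\;(q^{\prime}_X\circ\Delta^{\prime})^*(P_X)\;=\;\can^*(P_X)\;=\;P_U,
\]
where the second equality uses property (4*) and the last equality is the definition of $P_U=P$ as the restriction of $P_X$ along $\can:U\hookrightarrow X$.

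The only subtle point is the compatibility of $G_U$-structures under the pullback of the isomorphism in (6*): the $G_U$-structure on the right side of $\sigma_g^*(\mathcal G_t)\cong (q^{\prime}_X)^*(P_X)$ is defined via $\Phi$, so after pullback along $\Delta^{\prime}$ the resulting $G_U$-structure is defined via $(\Delta^{\prime})^*(\Phi)$. By the conclusion of Theorem~\ref{ThEquatingGroups}(1), one has $(\Delta^{\prime})^*(\Phi)=\id_{G_U}$, so the $G_U$-structure on $(\Delta^{\prime})^*(q^{\prime}_X)^*(P_X)$ coincides with the tautological one inherited from $P_X$. Hence $s^*(\mathcal G_t)\cong P_U$ as principal $G_U$-bundles, which is (ii). I expect no serious obstacle here; the bookkeeping around the identification $(\Delta^{\prime})^*(\Phi)=\id_{G_U}$ is the one place where care is required, but this is precisely what Theorem~\ref{ThEquatingGroups} was designed to provide.
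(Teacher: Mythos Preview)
Your proof is correct and follows essentially the same route as the paper's own argument: (i) is read off from (5*), and (ii) is obtained by pulling back the isomorphism of (6*) along $\Delta^{\prime}$, using (4*) to identify $(q^{\prime}_X\circ\Delta^{\prime})^*(P_X)=\can^*(P_X)=P_U$ and the normalization $(\Delta^{\prime})^*(\Phi)=\id_{G_U}$ to ensure compatibility of $G_U$-structures. The only cosmetic difference is that the paper cites the identity $(\Delta^{\prime})^*(\Phi)=\id_{G_U}$ as recorded right after equation~(\ref{KeyEquation}) rather than referring back to Theorem~\ref{ThEquatingGroups}(1), but the content is identical.
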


\begin{proof}
The property (i) is just the property (5*) above. Now by (6*) the $G_U$-bundles
$$\mathcal G_t|_{\{0\}\times U}=s^*(\mathcal G_t)=
(\Delta^{\prime})^*(\sigma_{g}^*(\mathcal G_t)) \ \text{and}
 \ {(\Delta^{\prime})}^*(q^{\prime}_X)^*(P_X)=\can^*(P_X)$$
are isomorphic, since ${\Delta^{\prime}}^*(\Phi)=\id_{G_U}$.
It remains to recall that the principal $G_U$-bundle
$\can^*(P_X)$ is the original $G_U$-bundle $P_U$
by the choice of $P_X$. Whence the Corollary.

\end{proof}

%



\begin{thebibliography}{EGAIII}

\bibitem[A]{LNM305}
\emph{Artin, M.} Comparaison avec la cohomologie classique: cas d'un
pr\'esch\'ema lisse, in {\it Th\'eorie des topos et cohomologie
\'etale des sch\'emas (SGA 4). Tome 3.} Lect.~Notes Math., vol.~305,
Exp. XI, Springer-Verlag, Berlin-New York, 1973.

\bibitem[Bh]{Bh} \emph{Bhatwadekar, S.M.} Analitic isomorphismsms and category of finitely generated modules,
Comm. in Algebra, 16 (1988), 1949--1958.








\bibitem[C-T/O]{C-TO}
\emph{Colliot-Th\'el\`ene, J.-L.; Ojanguren, M.} Espaces Principaux
Homog\`enes Localement Triviaux, Publ.~Math.~IH\'ES 75 (1992),
no.~2, 97--122.




\bibitem[SGA3]{SGA3} \emph{Demazure, M.; Grothendieck, A.} Sch\'emas en groupes,
Lect.~Notes Math., vol. 151--153, Springer-Verlag,
Berlin-Heidelberg-New York, 1970.


\bibitem[E]{E} \emph{Eisenbud, D.} Commutative algebra with a view toward algebraic geometry.
Graduate Texts in Mathematics 150, Springer-Verlag, New York, 1995.

\bibitem[FP]{FP} \emph{Fedorov, R.; Panin, I.} A proof of Grothendieck--Serre conjecture on principal bundles over a semilocal regular
ring containing an infinite field, Preprint, April 2013,
{http://www.arxiv.org/abs/1211.2678v2}.

\bibitem[Ga]{Ga} \emph{Gabber, O.} announced and still unpublished.



\bibitem[Gr1]{Gr1}
\emph{Grothendieck, A.} Torsion homologique et section rationnalles,
in {\it Anneaux de Chow et applications}, S\'{e}minaire Chevalley,
2-e ann\'{e}e, Secr\'{e}tariat math\'{e}matique, Paris, 1958.

\bibitem[EGAIII]{EGAIII1}
\emph{Grothendieck, A.} \'{E}l\'{e}ments de g\'{e}om\'{e}trie
alg\'{e}brique (r\'{e}dig\'{e}s avec la collaboration de Jean
Dieudonn\'{e}) : III. \'{E}tude cohomologique des faisceaux
coh\'{e}rents, Premi\`ere partie, Publ.~Math.~IH\'ES 11 (1961),
5--167.

\bibitem[EGAIV]{EGAIV2}
\emph{Grothendieck, A.} \'{E}l\'{e}ments de g\'{e}om\'{e}trie
alg\'{e}brique (r\'{e}dig\'{e}s avec la collaboration de Jean
Dieudonn\'{e}) : IV. \'{E}tude locale des sch\'{e}mas et des
morphismes de sch\'{e}mas, Seconde partie, Publ.~Math.~IH\'ES 24
(1965), 5--231.




\bibitem[Gr2]{Gr2}
\emph{Grothendieck, A.} Le groupe de Brauer II, in {\it Dix
expos\'{e}s sur la cohomologique de sch\'{e}mas}, Amsterdam,
North-Holland, 1968.










\bibitem[MV]{MV} \emph{Morel, F.; Voevodsky V.} $A^1$-homotopy theory of schemes, Publ. Math.~IH\'ES, 90 (1999), 45--143.



\bibitem[Ni1]{Ni1} \emph{Nisnevich, E.A.} Affine homogeneous spaces and finite subgroups of arithmetic groups
over function fields,  Functional Analysis and Its Applications 11
(1977), no.~1, 64--66.



\bibitem[Ni2]{Ni} \emph{Nisnevich, Y.}
Rationally Trivial Principal Homogeneous Spaces and Arithmetic of
Reductive Group Schemes Over Dedekind Rings, C.~R.~Acad.~Sci.~Paris,
S\'erie I, 299 (1984), no.~1, 5--8.



\bibitem[OP1]{OP1}
\emph{Ojanguren, M.; Panin, I.} A purity theorem for the Witt group,
Ann. Sci. Ecole Norm. Sup. (4) 32 (1999), no.~1, 71--86.


\bibitem[OP2]{OP2} \emph{Ojanguren, M.; Panin, I.}
Rationally trivial hermitian spaces are locally trivial, Math.~Z.
237 (2001), 181--198.



\bibitem[PSV]{PSV} \emph{Panin, I.; Stavrova, A.; Vavilov, N.}
On Grothendieck---Serre's conjecture concerning
principal $G$-bundles over reductive group schemes: I,
ArXiv e-prints, 0905.1418v3, April 2013.



\bibitem[P2]{Pa2}
\emph{Panin, I.} On Grothendieck---Serre's conjecture concerning
principal $G$-bundles over reductive group schemes: II, Preprint,
April 2013,
{http://www.math.org/0905.1423v3}.

\bibitem[Pan2]{Pan2}
\emph{Panin, I.}
On Grothendieck-Serre conjecture
concerning principal $G$-bundles over regular semi-local domains containing a finite field: II,
Preprint,
May 2014.

\bibitem[Pan3]{Pan3}
\emph{Panin, I.} Proof of Grothendieck--Serre conjecture on principal $G$-bundles over semi-local regular
domains containing a finite field, Preprint, May 2014.











\bibitem[Po]{P}
\emph{Popescu, D.} General N\'eron desingularization and
approximation, Nagoya Math.~J. 104 (1986), 85--115.

\bibitem[Poo]{Poo1}
\emph{Poonen, B.},
Bertini theorems over finite fields,
Annals of Mathematics, 160 (2004), 1099 –-1127.

\bibitem[ChPoo]{Poo2}
\emph{Charles, F., Poonen, B.},
Bertini irreducibility theorems over finite fields,
arXiv:1311.4960v1, 2013.


\bibitem[R1]{R1}
\emph{Raghunathan, M.S.} Principal bundles admitting a rational
section, Invent.~Math. 116 (1994), no. 1--3, 409--423.

\bibitem[R2]{R2}
\emph{Raghunathan, M.S.} Erratum: Principal bundles admitting a
rational section, Invent.~Math. 121 (1995), no.~1, 223.

\bibitem[R3]{R3}
\emph{Raghunathan, M.S.} Principal bundles on affine space and
bundles on the projective line, Math. Ann. 285 (1989), 309--332.


\bibitem[RR]{RR}
\emph{Raghunathan, M.S.; Ramanathan, A.} Principal bundles on the
affine line, Proc.~Indian Acad.~Sci., Math.~Sci. 93 (1984),
137--145.





\bibitem[Se]{Se}
\emph{Serre, J.-P.} Espaces fibr\'{e}s alg\'{e}briques, in {\it
Anneaux de Chow et applications}, S\'{e}minaire Chevalley, 2-e
ann\'{e}e, Secr\'{e}tariat math\'{e}matique, Paris, 1958.


\bibitem[Sw]{Sw}
\emph{Swan, R.G.} N\'eron---Popescu desingularization, Algebra and
Geometry (Taipei, 1995), Lect.~Algebra Geom. 2, Internat.~Press,
Cambridge, MA, 1998, 135--192.




\bibitem[Vo]{Vo}
\emph{Voevodsky, V.} Cohomological theory of presheaves with
transfers, in {\it Cycles, Transfers, and Motivic Homology
Theories}, Ann.~Math.~Studies, 2000, Princeton University Press.



\end{thebibliography}
\end{document}